\documentclass[11pt,colorinlistoftodos]{article}

\usepackage{blindtext}
\usepackage{geometry}


\usepackage[utf8]{inputenc} 
\usepackage[T1]{fontenc}    
\usepackage[hidelinks]{hyperref}
\usepackage{url}            
\usepackage{booktabs}       
\usepackage{amsfonts}       
\usepackage{amsmath}
\usepackage{amsthm}
\usepackage{amssymb}
\usepackage{nicefrac}       
\usepackage{microtype}      
\usepackage{cleveref}       
\usepackage{lipsum}         
\usepackage{graphicx}
\usepackage{doi}
\usepackage{color}
\usepackage{tikz-cd}
\usepackage{lmodern}
\usepackage{mathtools}
\usepackage[thinc]{esdiff}
\usepackage{cases}
\usepackage{empheq}

\usepackage{cleveref}
\usepackage{lipsum} 
\usepackage{todonotes}
\usepackage{orcidlink}
\usepackage{xcolor}
\usepackage{authblk}

\newcommand{\C}{\mathbb{C}}

\newcommand{\Z}{\mathbb{Z}}
\newcommand{\F}{\mathbb{F}}



\newtheorem{thm}{Theorem}[section]

\newtheorem{corollary}[thm]{Corollary}
\newtheorem{prop}[thm]{Proposition}

\newtheorem*{thm*}{Theorem} 

\theoremstyle{definition}
\newtheorem{definition}[thm]{Definition}
\newtheorem{remark}{Remark}[section]
\newtheorem{ex}{Example}[thm]

\DeclareMathOperator{\End}{End}
\DeclareMathOperator{\Der}{Der}
\DeclareMathOperator{\IDer}{IDer}

\DeclareMathOperator{\id}{id}
\DeclareMathOperator{\Zz}{Z}

\usepackage[%
backend=bibtex,%
style=numeric-comp,%
sorting=nyt,%
hyperref,%
]{biblatex}
\addbibresource{bibliography1.bib}


\title{A New Definition of Superbiderivations for Lie Superalgebras}

\author[1,2]{Alfonso Di Bartolo}
\author[4,5]{Francesco Paolo Di Fatta}
\author[1,3]{Gianmarco La Rosa}

\affil[1]{Dipartimento di Matematica e Informatica, \protect\\ Università degli Studi di Palermo,\protect\\ Via Archirafi 34, 90123 Palermo, Italy}
\affil[2]{{\href{mailto:alfonso.dibartolo@unipa.it}{\texttt{alfonso.dibartolo@unipa.it}}}}
\affil[3]{{\href{mailto:gianmarco.larosa@unipa.it}{\texttt{gianmarco.larosa@unipa.it}}}}
\affil[4]{Doctoral School on Mathematics and Computational Sciences,\protect\\
University of Messina, Catania and Palermo}
\affil[5]{{\href{mailto:francesco.difatta@studenti.unime.it
}{\texttt{francesco.difatta@studenti.unime.it
}}}}


\date{%
}

\setuptodonotes{color=blue!30}

\definecolor{math}{rgb}{0.0, 0.8, 0.6}

\begin{document}
\maketitle

\begin{abstract}
In this paper, we study superbiderivations on Lie superalgebras from structural and geometric perspectives. Motivated by the classical fact that the bracket of a Lie algebra is itself a biderivation, we propose a new definition of superbiderivation for Lie superalgebras—one that requires the bracket to be a superbiderivation, a condition not satisfied by existing definitions in the literature.
Our focus is on complete Lie superalgebras, a natural generalization of semisimple Lie algebras that has emerged as a promising framework in the search for alternative structural notions. In this setting, we introduce and study linear supercommuting maps, comparing our definition with previous proposals.
Finally, we present two applications: one involving the superalgebra of superderivations of the Heisenberg Lie superalgebra and another offering initial geometric insights into deformation theory via superbiderivations.
\end{abstract}


\textbf{Keywords:} Superbiderivations, Complete Lie superalgebras,  Linear super commuting
 maps

\textbf{MSC 2020:} 17B40, 17B65, 17B66

\section{Introduction}\label{sec:introduction}

The notion of superalgebra has emerged as a natural framework in various disciplines, particularly within the domains of Mathematics and Geometrical Physics. A Lie superalgebra is a \( \mathbb{Z}_2 \)-graded generalisation of a Lie algebra, where the algebraic structure is defined by a bracket obeying graded versions of antisymmetry and the Jacobi identity \cite{KAC19778,Scheunert1979}.

Exploring biderivations is a conventional line of inquiry in the theory of Lie algebras (see, among others, \cite{wangyu2013,hanwangxia2016,chen2016,changchen2019,changchenzhou2019,changchenzhou2021}), which are bilinear maps, acting as derivations in each argument. It so happens that classical problems associated with Lie algebras have been developed in the context of Lie superalgebras, as well as the notion of a biderivation into the superalgebraic context appears to naturally extend, and has been studied. The first appearance of the notion of a biderivation on a Lie superalgebra (called \emph{superbiderivation}) appears to be in \cite{xu2015properties}, where Xu and Wang introduced and studied such maps in 2015 in the context of the Heisenberg superalgebra— a Lie superalgebra analogue of the classical Heisenberg Lie algebra. Subsequently, a more classical approach to the study of superbiderivations on Lie superalgebras was developed in \cite{fan2017superbiderivations}, where Fan and Dai focus on the centerless super-Virasoro algebra. It has been demonstrated that all superbiderivations in this case are inner, and the structure of linear super-commuting maps has been investigated. Following these two works, several notable studies appeared in the literature, where (super)biderivations were investigated in broader and diverse contexts (\cite{Xia01122016,Bai2023, Cheraghpour03042019,wangyu2013,hanwangxia2016,chen2016,changchen2019,changchenzhou2019,changchenzhou2021}). 

Setting aside the topic of biderivations for the moment, one of the most active and enduring areas of research in Lie algebras is their classification, more precisely the classification of nilpotent Lie algebras. Over the years, various approaches have been proposed: assumptions about the dimension of the center, the dimension of the derived subalgebra, the nilpotency index, and so on (\cite{bartolone2011,bartolone2018solvable, biggs2017class,KHUHIRUN2015,JOHARI2021}).
This variety of approaches stems from the fact that the Levi decomposition holds for Lie algebras: every Lie algebra can be decomposed as a semidirect product of a semisimple part (the Levi subalgebra) and a solvable ideal. All simple, and thus semisimple, Lie algebras have been fully classified over $\mathbb{C}$ and $\mathbb{R}$ (\cite{erdmann2006introduction, humphreys2012introduction}); however, the classification of solvable Lie algebras remains an open problem.
Since every solvable Lie algebra can be constructed from a nilpotent ideal (the nilradical) and its derivations (\cite{malcev1945}), classifying solvable Lie algebras largely amounts to classifying nilpotent Lie algebras. Unfortunately, this is well known to be computationally \emph{wild}.

In light of this, several alternative approaches to the classical classification of nilpotent Lie algebras have been developed over the years. One of the most promising approaches, which avoids the computational costs typically associated with such classifications, uses the geometric theory of deformations to classify $k$-step nilpotent Lie algebras, as discussed in \cite{Goze2015}. Nevertheless, the connection between Lie superalgebras, deformation, and cohomology theory is much deeper than we can cover in this paper. We refer interested readers to references \cite{Corwin1975} and \cite{Milnor1965} for more information.

In this context, however, another line of research has progressively emerged: are there alternative definitions of semisimple Lie algebras? Over time, several candidates have been proposed, such as \emph{complete} Lie algebras (those with all derivations inner and trivial center), \emph{sympathetic} Lie algebras (\emph{perfect} Lie algebras, i.e., satisfying $[L,L]=L$, and complete), among others.
In particular, for the latter class, Angelopoulos constructed in \cite{angelopoulos} a family of such Lie algebras that are not semisimple. 
The situation is more intricate for Lie superalgebras. In fact, Levi's decomposition theorem generally does not hold for Lie superalgebras (see \cite{KAC19778}).

However, these two structures and approaches have one thing in common: the study of biderivations, particularly in the simple, semisimple, and complete cases (\cite{DIBARTOLOLAROSA_COMPLETE_BIDER,tang2018}). It is well known that derivations of simple (and semisimple) Lie algebras are inner, and the same holds for Lie superalgebras.
The definitions of biderivations for Lie superalgebras vary across the literature. Among the numerous studies on superbiderivations, \cite{Yuan2018} is particularly noteworthy, as it investigates superbiderivations on classical simple Lie superalgebras.

In our work, we take a different approach, revisiting the definition of a biderivation itself. Specifically, just as in the case of Lie algebras, we require that the bracket of a Lie superalgebra be a superbiderivation. This property is not satisfied by the existing definitions in the literature.
Furthermore, we study these superbiderivations in the context of complete Lie superalgebras. Consequently, we introduce the notion of linear supercommuting maps and investigate their properties in the context of complete Lie superalgebras.

After this introduction, \Cref{sec:preliminaries_and_basic:idea} contains a collection of definitions and preliminary results necessary for understanding this paper, as well as the notational conventions we will adopt.
\Cref{sec:main results} presents the main results discussed above.
\Cref{sec:linearcomm} studies linear supercommuting maps, with a focus on comparing the existing definitions in the literature with the one introduced here.
\Cref{sec:applications} contains two applications: one concerning the superalgebra of superderivations of the complete Heisenberg Lie superalgebra and another of a more geometric nature. In the latter, we lay the groundwork for a perspective based on deformation theory.

\section{Preliminaries and basic idea}\label{sec:preliminaries_and_basic:idea}

We recall here that a \emph{superalgebra} is a $\mathbb{Z}_2$-graded vector space $L=L_0\oplus L_1$ over a field $\mathbb{F}$, endowed with an algebraic structure such that $L_\alpha L_\beta \subseteq L_{\alpha + \beta}$, for any $\alpha, \beta \in \mathbb{Z}_2$.
     A \emph{Lie superalgebra} is a superalgebra associated with a bilinear map                       $[\cdot,\cdot]\colon L\times L\rightarrow L$ satisfying the following properties:

\begin{align}
    &[L_{\alpha}, L_{\beta}]\subseteq L_{\alpha + \beta},\,\text{ for any }\alpha,\beta \in \mathbb{Z}_2\label{eq:z2_gradation},\\  
    &[x, y] = -(-1)^{\vert x \vert \vert y \vert} [y, x]\label{eq:super_skewsimmetry},\\ &(-1)^{\vert x \vert \vert z \vert}[x,[y,z]] + (-1)^{\vert y \vert \vert x \vert}[y,[z,x]] + (-1)^{\vert z \vert \vert y \vert}[z,[x,y]] = 0\label{eq:graded_jacobi_id},
\end{align}
 for all $x,y,z$ homogeneous elements in $L$.

By convention, the elements in a (Lie) superalgebra $L$ that has degree $0$ are said to be \emph{even}, and that ones with degree $1$ are said to be \emph{odd}. We will use this convention throughout this paper.
In addition, unless stated otherwise, the Lie superalgebra is assumed to be over a field of characteristic that is distinct from $2$. The term $L$ denotes a Lie superalgebra over a field  $\mathbb{F}$. Moreover, when in a formula, the degree of an element appears, it is understood that the element is homogeneous, i.e. it belongs to $L_\alpha$, for some $\alpha\in\mathbb{Z}_2$.

\begin{definition}\label{def:linear_homo_map}
A linear map \(f\colon L \mapsto L\) is said \emph{homogeneous of degree} \(\vert f\vert \in \big\{0,1\big\}\) if \(f(L_i) \subseteq L_{i + \vert f \vert}\), with \(i=0,1\).
\end{definition}

\begin{definition}\label{def:superderivation}

A homogeneous map \(D\colon L \mapsto L\) with degree \(\vert D \vert \) is said \emph{superderivation} if

\begin{equation}\label{eq:superderivation}
    D([x,y])=[D(x),y]+(-1)^{\vert D \vert \vert x \vert}[x,D(y)],
\end{equation}
for every $x,y\in L$.
\end{definition}

The set of all superderivations is known as the \emph{superderivation algebra} of $L$, which is denoted by $\Der(L)$. The algebra $\Der(L)$ has a natural $\mathbb{Z}_2$-gradation, i.e., 
\begin{equation*}
    \Der(L)=\Der_0(L)\oplus \Der_1(L),
\end{equation*}
 
where $\Der_0(L)$ is the set of all even superderivations and $\Der_1(L)$ is the set of all odd superderivations. Moreover, the superalgebra of superderivations is a subsuperalgebra of $\End(L)$ with the usual graded commutator (or supercommutator) given by

\begin{equation}\label{eq:supercomm_Der}
    [D_1,D_2]=D_1 D_2 - (-1)^{\vert D_1 \vert \vert D_2 \vert }D_2 D_1.
\end{equation}

It is easy to check that \Cref{eq:graded_jacobi_id} it is equivalent to
\begin{equation*}
    [x,[y,z]]=[[x,y],z]+(-1)^{\vert x \vert \vert y \vert}[y,[x,z]].
\end{equation*}
By the latter equation, if \(x\) is an homogeneus element of \(L\), it follows the map \(L_x:L \mapsto L\), with \(L_x(y)=[x,y]\) is a superderivation of degree \(\vert x \vert\). The map \(L_x\) is called \emph{inner superderivation}. The set of inner superderivations is denoted by \(\IDer(L)\), and it can be proved that is an ideal of \(\Der(L)\). Also \(\IDer(L)\) has a natural $\mathbb{Z}_2$-gradation, i.e., 
\begin{equation*}
    \IDer(L)=\IDer_0(L)\oplus \IDer_1(L),
\end{equation*}

where \(\IDer_i(L)=\big\{L_x \, / x \in L_i\big\}\), \(i=0,1\).
\begin{definition}\label{def:complete_Lie_superalgebra}
    A Lie superalgebra $L$ is called a \emph{complete} Lie superalgebra if $L$ satisfies the following two conditions:
    \begin{enumerate}
        \item $\Zz(L)=\{0\}.$
        \item  $\Der(L)=\IDer(L).$
    \end{enumerate}
\end{definition}

\begin{remark}
The second condition is equivalent to 
$ \Der_i(L)=\IDer_i(L)$ for every \(i=0,1\). Indeed, if \(\Der_i(L)=\IDer_i(L)\) then trivially  $\Der(L)=\IDer(L)$. Conversely, is evident that \(\IDer_i(L) \subseteq \Der_i(L)\). Let \(D \in \Der_0(L)\), in particular \(D \in \Der(L)=\IDer(L)=\IDer_0(L)\oplus \IDer_1(L)\), then there exist \(x_0 \in L_0, \, x_1 \in L_1\) such that \(D=L_{x_0} + L_{x_1}\). For every \(y \in L_i\), \([x_0+x_1,y]=D(y) \subseteq L_i\). But \([x_1,y] \in L_{i+1},\) so \([x_1,y]=0_L\). As conseguence \(x_1 \in Z(L)\), i.e. 
\(x_1=0_L\). Hence \(D=L_{x_0} \in \IDer_0(L)\). Therefore \(\Der_0(L)=\IDer_0(L)\). Similary \(\Der_1(L)=\IDer_1(L)\)

\end{remark}

Now, we proceed to introduce a new family of homogeneous maps on $L$. So let us consider an homogeneous linear map $D\colon L\to L$ such that, for all $x,y\in L$,
\begin{equation}\label{eq:homomap_type_2}
    D([x,y])=(-1)^{\vert D \vert \vert y \vert}[D(x),y]+[x,D(y)].
\end{equation}

In order to avoid unnecessary complexity in the ensuing argument, we define two vector spaces, $\mathcal{F}_1$ and $\mathcal{F}_2$, as follows:

\begin{align}
    \mathcal{F}_1(L)&=\left\{D\colon L\to L\mid \text{\Cref{eq:superderivation}} \text{ holds},\text{ for all } x,y\in L\right\},\\
    \mathcal{F}_2(L)&=\left\{D\colon L\to L\mid \text{\Cref{eq:homomap_type_2}} \text{ holds},\text{ for all } x,y\in L\right\}.
\end{align}

If $D\in\mathcal{F}_1(L)$ (respectively $D\in\mathcal{F}_2(L)$), we say that $D$ is of \emph{type 1} (resp. \emph{type 2}). Clearly, the type 1 maps are exactly the superderivations. The vector space $\mathcal{F}_2(L)$ has a natural $\mathbb{Z}_2$-gradation, i.e., 
\begin{equation*}
    \mathcal{F}_2(L)=\mathcal{F}_2(L)_0\oplus\mathcal{F}_2(L)_1,
\end{equation*}
where $\mathcal{F}_2(L)_0$ is the set of all even type 2 maps and $\mathcal{F}_2(L)_1$ is the set of all odd type 2 maps. In a manner analogous to that observed in the case of left multiplications, by the graded Jacobi identity, the map \(R_x\colon L \to L\), with \(R_x(y)=[y,x]\), is a type 2 map, for every $x\in L$. These kinds of maps are called type 2 \emph{inner}.
As with homogeneous linear maps, the definition of bilinear maps is recalled here.

\begin{definition}
A bilinear map \(B\colon L \times L \to L\) is said \emph{homogeneous of degree} \(\vert B \vert \in \big\{0,1\big\}\), if
\(B(L_{\alpha},L_{\beta}) \subseteq L_{\alpha + \beta + \vert B \vert}\).
\end{definition}

\begin{definition}\label{defn:superbiderivation_1}
An homogeneous bilinear map \(B\colon L \times L \mapsto L\) is said \emph{superbiderivation} if, for all \(x\in L\), \(B(x,-)\) is a superderivation and \(B(-,x)\) is a type 2 map. 
\end{definition}

The previous definition is equivalent to the following one.

\begin{definition}\label{defn:superbiderivation_2}
    An homogeneous bilinear map \(B\colon L \times L \to L\) is said \emph{superbiderivation} of $L$ if, for all \(x,y,z\in L\), the following equations hold
    \begin{align}
        B(x,[y,z])&=[B(x,y),z]+(-1)^{(\vert B \vert + \vert x \vert) \vert y \vert}[y,B(x,z)],\label{eq:superbiderivation_eq_1}\\
        B([x,y],z)&=(-1)^{(\vert B \vert + \vert z \vert) \vert y \vert}[B(x,z),y]+[x,B(y,z)].\label{eq:superbiderivation_eq_2}
    \end{align}
\end{definition}

\begin{ex}
    A basic example of a superbiderivation in a Lie superalgebra is its Lie superbracket. Indeed, since  $[x,-]=L_x$ and $[-,x]=R_x$, by \Cref{defn:superbiderivation_1} it follows that $[\cdot,\cdot]$ is a superbiderivation.
\end{ex}

Among the definitions of superbiderivations of a Lie superalgebra found in the literature, the one given by J. Yuan and X. Tang in \cite{Yuan2018} in 2018 initially appears similar to the one presented in this work. 

\begin{definition}[c.f. \cite{Yuan2018}]\label{def Tang}
   An homogeneous bilinear map \(B\colon L \times L \to L\) is said \emph{superbiderivation} of $L$ if, for all homogeneous \(x,y,z\in L\), the following equations hold
    \begin{align}
        B(x,[y,z])&=[B(x,y),z]+(-1)^{(\vert B \vert + \vert x \vert) \vert y \vert}[y,B(x,z)],\label{eq:superbiderivation_eq_1_Tang}\\
        B([x,y],z)&=(-1)^{\vert y \vert  \vert z \vert}[B(x,z),y]+(-1)^{\vert x \vert  \vert B \vert}[x,B(y,z)].\label{eq:superbiderivation_eq_2_Tang}
    \end{align}  
\end{definition}

However, the two definitions are not equivalent, as the following example shows, i.e. there exist homogeneous bilinear maps on L which satisfy one definition but not the other and vice versa (\Cref{ex:different_defns1}).

\begin{ex}\label{ex:different_defns1}
    Clearly, for a bilinear map with \(\vert B \vert=0\) the \Cref{defn:superbiderivation_2,def Tang} coincide. However, if \(\vert B \vert=1\), the definitions can be different. To show this, let \(L=L_0 \oplus L_1\), with \(L_0=\langle x \rangle\), \(L_1=\langle v \rangle\), and the only non-zero bracket is \([v,v]=x\). Since \([L,L]\) lies in the center of $L$, $L$ is a Lie superalgebra because the graded Jacobi identity is trivially satisfied. Let \(B \colon L \times L \to L\) be a map of degree $1$. Then

\begin{center}
    \(B(x,x)=\alpha v, \, \, \, \, B(x,v)=\beta x\),\,\,\,\,
    \(B(v,x)=\gamma x, \, \,\text{and } \, \, B(v,v)=\delta v\),
\end{center}
for some $\alpha,\beta,\gamma,\delta\in\F$. We now impose \(B\) to be a superbiderivation of degree $1$ according to \Cref{defn:superbiderivation_2}. To do this, we must impose that \Cref{eq:superbiderivation_eq_1,eq:superbiderivation_eq_2} hold for the elements of the basis of $L$. We first show that not all choices of $x,v$ that run into $B$ give us non--trivial relations on scalars. Let us start with observing that on one hand $B(x,[x,v])=0$ and, on the other hand, we have
\begin{equation*}
    [B(x,x),v]+(-1)^{(1 + \vert x \vert ) \vert x \vert }[x,B(x,v)]=\alpha x.
\end{equation*}
This implies that $\alpha=0$, that is $B(x,x)=0$. So $B(x,u)\in L_0=\Zz(L)$ (same for $B(u,x)$), for every $u\in L$. Then, $B(x,[u_1,u_2])=0$, for any $u_1,u_2\in L$, because $[u_1,u_2]\in L_0$. Hence $[B(x,u_1),u_2]=[u_1,B(x,u_2)]=0$. Moreover, for any $u\in L$, $[x,u]=0$, and then $B(v,[x,u])=0$. In addition, $B(v,x)\in L_0$, and it follows that $[B(v,x),u]=0$. Hence the following holds
\begin{equation*}
    B(v,[x,u])=[B(v,x),u]+(-1)^{(1 + \vert v \vert ) \vert x \vert }[x,B(v,u)]=0.
\end{equation*}
Similar arguments imply that 
\begin{equation*}
    B(v,[u,x])=[B(v,u),x]+(-1)^{(1 + \vert v \vert ) \vert u \vert }[u,B(v,x)]=0.
\end{equation*}
So it remains to check \Cref{eq:superbiderivation_eq_1} when the triple $\left\{v,v,v\right\}$ is considered. By the assumptions, $B(v,[v,v])=\gamma x$. But, at the same time, we must have
\begin{equation*}
    [B(v,v),v]+(-1)^{(1 + \vert v \vert ) \vert v \vert }[v,B(v,v)]=2 \delta x.
\end{equation*}
This implies the condition $\gamma=2\delta$. 
To verify if \Cref{eq:superbiderivation_eq_2} holds, one can apply the same arguments to check this equation only for the triple $\left\{v,v,v\right\}$. Since $B([v,v],v)=\beta x$, and 
\begin{equation*}
    (-1)^{(1 + \vert v \vert) \vert v \vert}[B(v,v),v]+[v,B(v,v)]=2 \delta x,
\end{equation*}
\Cref{eq:superbiderivation_eq_2} holds if and only if $\beta=2\delta$. 
Hence, a superbiderivation of degree $1$ is necessarily of the form:

\begin{center}
    \(B(x,x)=0, \, \, \, \, B(x,v)=B(v,x)=2\delta x,\,\,\,\text{and } \, \, B(v,v)=\delta v\).
\end{center}

By similar reasoning, such a bilinear map satisfies conditions \Cref{eq:superbiderivation_eq_1_Tang} and \Cref{eq:superbiderivation_eq_2_Tang} if and only if it is of the form
\begin{equation*}
    B(x,x)=0, \, \, \, \, B(x,v)=-2\delta x,\,\,\,\,B(v,x)=2\delta x,\,\,\,\,\text{and } \, \, B(v,v)=\delta v.
\end{equation*}
Finally, a non-zero bilinear map $B$ of degree $1$ cannot be a superbiderivation under both definitions simultaneously.
\end{ex}

\section{Main results}\label{sec:main results}

We are now ready to present a set of results concerning the relationship established by maps of type 1 and type 2. In fact, we can obtain a map of type 2 from a map of type 1 with a multiplication by a special factor and vice versa.  

\begin{prop}\label{prop:antiderivazione}
Let \(D\) be a superderivation of \(L\) of degree \(\vert D \vert\). Let \(\delta\colon L\to L\) be a map such that:

\begin{enumerate}
    \item \(\delta(x)=(-1)^{\vert D \vert \vert x \vert} D(x)\),  for $x\in L$.

    \item \(\delta(x_0 + x_1)=\delta(x_0)+\delta(x_1)\), for all $x_0 \in L_0$, $x_1 \in L_1$.
\end{enumerate}
Then \(\delta\in\mathcal{F}_2(L)\).
\end{prop}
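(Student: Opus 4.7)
The plan is to verify \Cref{eq:homomap_type_2} for $\delta$ by a direct computation that converts each occurrence of $D$ into an occurrence of $\delta$ via the sign rule in item~(1), and then reconciles the resulting sign factors with those demanded by the definition of a type~2 map. Since the defining relation for $\mathcal{F}_2(L)$ is bilinear in $(x,y)$ and item~(2) extends $\delta$ additively across the $\Z_2$-grading, it suffices to check the identity on homogeneous $x,y \in L$. Note also that $\delta$ is homogeneous of degree $|\delta|=|D|$, since it agrees with $\pm D$ on each homogeneous component.

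First I would start from the left-hand side
\begin{equation*}
\delta([x,y]) \;=\; (-1)^{|D|(|x|+|y|)}\, D([x,y]),
\end{equation*}
using that $[x,y]$ has degree $|x|+|y|$. Next, I would apply the superderivation identity \Cref{eq:superderivation} for $D$ to expand $D([x,y])$, and then replace $D(x)=(-1)^{|D||x|}\delta(x)$ and $D(y)=(-1)^{|D||y|}\delta(y)$ inside the two resulting brackets $[D(x),y]$ and $[x,D(y)]$. This yields a sum of two terms, each carrying an explicit sign $(-1)^{\alpha_i}$ which is a sum of products of $|D|$, $|x|$, $|y|$.

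The key step is then the modular arithmetic on the exponents: in the coefficient of $[\delta(x),y]$ one gets $|D|(|x|+|y|)+|D||x| \equiv |D||y| \pmod 2$, and in the coefficient of $[x,\delta(y)]$ one gets $|D|(|x|+|y|)+|D||x|+|D||y| \equiv 0 \pmod 2$. These are precisely the signs required by \Cref{eq:homomap_type_2} with $|\delta|=|D|$, so the identity
\begin{equation*}
\delta([x,y]) \;=\; (-1)^{|\delta||y|}[\delta(x),y]+[x,\delta(y)]
\end{equation*}
follows.

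There is no real obstacle beyond bookkeeping of the $\Z_2$-signs; the only subtlety is that $\delta$ is defined on homogeneous elements and extended additively, so one must invoke item~(2) to conclude that $\delta \in \mathcal{F}_2(L)$ as a map on all of $L$, not merely on homogeneous pairs.
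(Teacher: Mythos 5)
Your proposal is correct and follows essentially the same route as the paper: reduce to homogeneous $x,y$, write $\delta([x,y])=(-1)^{|D|(|x|+|y|)}D([x,y])$, expand with the superderivation identity, and absorb the sign factors into $\delta(x)$, $\delta(y)$ to recover the type~2 identity with $|\delta|=|D|$. Your sign bookkeeping ($|D|(|x|+|y|)+|D||x|\equiv|D||y|$ and $|D|(|x|+|y|)+|D||x|+|D||y|\equiv 0$ mod $2$) matches the paper's computation exactly, and the appeal to item~(2) for additivity across the grading is the same remark the paper makes.
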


\begin{proof}
If \(x,y \in L\) have the same degree, then \( \vert x+y \vert = \vert x\vert =\vert y \vert\). Since \(D\) is linear, by the two conditions, it follows that \(\delta\) is linear.
Moreover, \(\delta\) is also homogeneous, with \(\vert \delta \vert =\vert D \vert\) and, since $D$ is a superderivation, for every $x,y\in L$, at last we have

\begin{align*}
  \delta([x,y]) &= (-1)^{\vert D \vert \vert [x,y] \vert }D([x,y]) &\\
    &= (-1)^{\vert D \vert ( \vert x \vert + \vert y \vert)} \big\{[D(x),y]+(-1)^{\vert D \vert \vert x \vert}[x,D(y)] \big\} &\\
    &= (-1)^{\vert D \vert ( \vert x \vert + \vert y \vert)} [D(x),y] + (-1)^{\vert D \vert (\vert x \vert + \vert y \vert)+ \vert D \vert \vert x \vert} [x,D(y)]  &\\
    & =(-1)^{\vert D \vert \vert y \vert} [(-1)^{\vert D \vert \vert x \vert}D(x),y] + [x,(-1)^{\vert D \vert \vert y \vert} D(y)] &\\
    & = (-1)^{\vert \delta \vert \vert y \vert} [\delta(x),y] + [x,\delta(y)].
\end{align*}
    
\end{proof}

With similar arguments, one can prove the result below. 

\begin{prop}\label{prop:derivazioni}
Let $\delta\in\mathcal{F}_2(L)$ of degree \(\vert \delta \vert\). Let \(D\colon L\to L\) be a map such that:

\begin{enumerate}
    \item \(D(x)=(-1)^{\vert \delta \vert \vert x \vert} \delta(x)\), for $x\in L$.

    \item \(D(x_0 + x_1)=D(x_0)+D(x_1)\), for all $x_0 \in L_0$, $x_1 \in L_1$.
\end{enumerate}
Then \(D\) is a superderivation of $L$.
\end{prop}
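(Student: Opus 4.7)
The plan is to mirror the proof of \Cref{prop:antiderivazione} with the roles of type~1 and type~2 swapped. First I would verify that $D$ is linear. If $x,y\in L$ are homogeneous of the same degree then $\vert x+y\vert = \vert x\vert = \vert y\vert$, and condition~(1) together with the linearity of $\delta$ gives
\begin{equation*}
D(x+y) = (-1)^{\vert\delta\vert\vert x+y\vert}\delta(x+y) = (-1)^{\vert\delta\vert\vert x\vert}\bigl(\delta(x)+\delta(y)\bigr) = D(x)+D(y).
\end{equation*}
Condition~(2) then handles the mixed-parity case, which must be stipulated separately because the sign prefactor in~(1) is defined only on homogeneous inputs. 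Compatibility with scalars is immediate. It is likewise clear that $D$ is homogeneous of degree $\vert D\vert = \vert\delta\vert$, since on each $L_i$ it coincides with $\delta$ up to a global sign.

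The main step is verifying \Cref{eq:superderivation} for $D$. Let $x,y\in L$ be homogeneous. Using $\vert[x,y]\vert=\vert x\vert+\vert y\vert$, condition~(1), and then the type~2 identity \Cref{eq:homomap_type_2} for $\delta$, I would compute
\begin{align*}
D([x,y]) &= (-1)^{\vert\delta\vert(\vert x\vert+\vert y\vert)}\delta([x,y]) \\
&= (-1)^{\vert\delta\vert(\vert x\vert+\vert y\vert)}\bigl\{(-1)^{\vert\delta\vert\vert y\vert}[\delta(x),y]+[x,\delta(y)]\bigr\}.
\end{align*}
The sign on the first summand collapses to $(-1)^{\vert\delta\vert\vert x\vert+2\vert\delta\vert\vert y\vert}=(-1)^{\vert\delta\vert\vert x\vert}$, and pulling this factor into the bracket converts $\delta(x)$ into $D(x)$. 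The second summand rewrites as $(-1)^{\vert\delta\vert\vert x\vert}[x,(-1)^{\vert\delta\vert\vert y\vert}\delta(y)] = (-1)^{\vert D\vert\vert x\vert}[x,D(y)]$. Summing produces exactly $D([x,y]) = [D(x),y] + (-1)^{\vert D\vert\vert x\vert}[x,D(y)]$, which is the superderivation property.

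No step is genuinely difficult—the argument reduces to the same sign-tracking performed in \Cref{prop:antiderivazione}, and the key cancellations rest on $(-1)^{2k}=1$. The only subtlety worth flagging is why condition~(2) must be imposed separately: without it, nothing forces additivity of $D$ across the grading decomposition, since the sign prefactor in~(1) is only well-defined on homogeneous arguments.
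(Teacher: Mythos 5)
Your proof is correct and follows exactly the route the paper intends: it mirrors the proof of \Cref{prop:antiderivazione} with the type~1 and type~2 roles exchanged, and the sign bookkeeping ($(-1)^{\vert\delta\vert\vert x\vert+2\vert\delta\vert\vert y\vert}=(-1)^{\vert\delta\vert\vert x\vert}$) is handled properly. The remark on why condition~(2) must be imposed separately also matches the paper's own comment that homogeneous elements only generate $L$, so nothing further is needed.
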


The set of homogeneous elements is a set of generators for \(L\). In some sense, we cannot consider the second property of \Cref{prop:antiderivazione} and \Cref{prop:derivazioni} if we impose the linearity. So we have two maps $f\colon\Der(L)\to\mathcal{F}_2(L)$ and $g\colon\mathcal{F}_2(L)\to\Der(L)$ such that

\begin{equation*}
f(D)(x)=(-1)^{\vert D \vert \vert x \vert}D(x) \quad\text{ and }\quad g(\delta)(x)=(-1)^{\vert \delta \vert \vert x \vert}\delta(x),
\end{equation*}

for all $x\in L$.
The maps $f$ and $g$ are (linear) homogeneous of degree $0$ (they do not change the degree). 
Indeed, for every $D_1,D_2\in\Der_\alpha(L)$, with respect to the map $f$, we have
\begin{align*}
    f(D_1+D_2)(x)&=(-1)^{\vert D_1+D_2\vert\vert x\vert}\left(D_1+D_2\right)(x)\\
    &=(-1)^{\vert D_1 \vert \vert x \vert}D_1(x)+(-1)^{\vert D_2 \vert \vert x \vert}D_2(x)\\
    &=f(D_1)(x)+f(D_2)(x),
\end{align*}
for all $x\in L$. The latter is true because $D_1$ and $D_2$ have the same degree. Therefore, $f$ is linear because one can extend this argument to every superderivation, since such a map can be written as a sum of a superderivation of degree $0$ and a superderivation of degree $1$.
Furthermore, for all $x\in L$, we have
\begin{equation*}
    f(g(\delta))(x) = (-1)^{\vert g(\delta) \vert \vert x \vert} g(\delta)(x).
\end{equation*}

Since $g$ (and also $f$) has degree $0$, we can conclude that

\begin{equation*}
    f(g(\delta))(x)=(-1)^{\vert \delta \vert \vert x \vert} g(\delta)(x) = (-1)^{\vert \delta \vert \vert x \vert} (-1)^{\vert \delta \vert \vert x \vert} \delta(x).
\end{equation*}

That is \(fg=\id_{\mathcal{F}_2(L)}\) and, similary,
\(gf=\id_{\Der(L)}\). Thus, the results outlined above can be summarised as follows.

\begin{thm}\label{thm:anti/der_iso}
The vector space \(\mathcal{F}_2(L)\) is isomorphic to \(\Der(L)\).   
\end{thm}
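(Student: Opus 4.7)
The plan is to assemble the maps $f$ and $g$ constructed just before the statement into a pair of mutually inverse linear maps between $\Der(L)$ and $\mathcal{F}_2(L)$. The well-definedness of $f$ (that $f(D)$ actually lies in $\mathcal{F}_2(L)$ whenever $D$ is a homogeneous superderivation) is exactly the content of \Cref{prop:antiderivazione}, and the well-definedness of $g$ is the content of \Cref{prop:derivazioni}. Since every superderivation decomposes uniquely into a sum of an even and an odd superderivation, extending by linearity gives globally defined maps $f\colon\Der(L)\to\mathcal{F}_2(L)$ and $g\colon\mathcal{F}_2(L)\to\Der(L)$.

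Next I would check linearity of $f$ and $g$. For homogeneous elements of the same degree this is the calculation already displayed in the text, since then the sign $(-1)^{|D||x|}$ factors through addition. For a sum of a degree $0$ and a degree $1$ element, linearity follows from the componentwise definition. The same argument applies to $g$ with the roles of $D$ and $\delta$ swapped.

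It then remains to verify that $f$ and $g$ are mutually inverse. Both maps preserve degree (so $|g(\delta)|=|\delta|$ and $|f(D)|=|D|$), hence for a homogeneous $\delta\in\mathcal{F}_2(L)$ and any homogeneous $x\in L$,
\begin{equation*}
    f(g(\delta))(x)=(-1)^{|g(\delta)||x|}g(\delta)(x)=(-1)^{|\delta||x|}(-1)^{|\delta||x|}\delta(x)=\delta(x),
\end{equation*}
and the analogous computation gives $g(f(D))=D$. Extending by linearity to the full spaces yields $fg=\id_{\mathcal{F}_2(L)}$ and $gf=\id_{\Der(L)}$, establishing the isomorphism.

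I do not expect any real obstacle here: the theorem is essentially a restatement of the discussion preceding it, and the main conceptual work (producing the sign twist that intertwines \Cref{eq:superderivation} with \Cref{eq:homomap_type_2}) was already done in \Cref{prop:antiderivazione,prop:derivazioni}. The only mildly delicate point is ensuring that the sign twist, which is defined separately on each homogeneous component, really extends to a linear map on all of $L$; this is handled by the decomposition $\Der(L)=\Der_0(L)\oplus\Der_1(L)$ (and similarly for $\mathcal{F}_2(L)$) together with the fact that $f$ and $g$ are degree-preserving.
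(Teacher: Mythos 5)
Your argument is correct and follows essentially the same route as the paper: the paper also builds the mutually inverse degree-preserving maps $f$ and $g$ from \Cref{prop:antiderivazione,prop:derivazioni}, checks linearity on same-degree components and extends via the even/odd decomposition, and verifies $fg=\id_{\mathcal{F}_2(L)}$ and $gf=\id_{\Der(L)}$ by the same sign-squaring computation. Nothing to add.
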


It is clear that the space $\mathcal{F}_2(L)$ naturally inherits a Lie superalgebra structure via the standard supercommutator, as will be made precise in the following result.

\begin{prop}\label{prop:antider_is_Lie_superalgebra}
The space \(\mathcal{F}_2(L)\) with bracket given by
\([\delta_1,\delta_2]=\delta_1\delta_2 - (-1)^{\vert \delta_1 \vert \vert \delta_2 \vert} \delta_2 \delta_1\) is a Lie superalgebra.
\end{prop}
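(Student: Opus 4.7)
The plan is to exploit the fact that $\End(L)$ endowed with the graded commutator \Cref{eq:supercomm_Der} is itself a Lie superalgebra, and that $\mathcal{F}_2(L)$ is a $\mathbb{Z}_2$-graded subspace of $\End(L)$. Under this viewpoint, super-antisymmetry and the graded Jacobi identity are inherited for free, so the whole task reduces to proving that $\mathcal{F}_2(L)$ is \emph{closed} under the bracket, i.e.\ that $[\delta_1,\delta_2]\in\mathcal{F}_2(L)$ whenever $\delta_1,\delta_2\in\mathcal{F}_2(L)$ are homogeneous.

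To verify closure, I would take $\delta_1,\delta_2\in\mathcal{F}_2(L)$ of degrees $a$ and $b$ respectively and expand $\delta_1\delta_2([x,y])$ on homogeneous $x,y\in L$ in two passes: first apply the type 2 identity \Cref{eq:homomap_type_2} to $\delta_2([x,y])$, and then apply it again with $\delta_1$ to each of the two resulting brackets $[\delta_2(x),y]$ and $[x,\delta_2(y)]$. This produces four terms, one of which is $(-1)^{(a+b)|y|}[\delta_1\delta_2(x),y]$, one of which is $[x,\delta_1\delta_2(y)]$, and two "cross" terms of the form $[\delta_2(x),\delta_1(y)]$ and $[\delta_1(x),\delta_2(y)]$. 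An analogous expansion, with $\delta_1$ and $\delta_2$ swapped, produces the four analogous terms for $\delta_2\delta_1([x,y])$.

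The concluding step is to assemble $[\delta_1,\delta_2]([x,y])=\delta_1\delta_2([x,y])-(-1)^{ab}\delta_2\delta_1([x,y])$ and regroup. The terms proportional to $[\delta_1\delta_2(x),y]$ and $[\delta_2\delta_1(x),y]$ collapse into $(-1)^{(a+b)|y|}\,[[\delta_1,\delta_2](x),y]$; the terms proportional to $[x,\delta_1\delta_2(y)]$ and $[x,\delta_2\delta_1(y)]$ collapse into $[x,[\delta_1,\delta_2](y)]$; and the four "cross" terms cancel pairwise because of the sign $(-1)^{ab}$ in the supercommutator. The result is precisely the identity \Cref{eq:homomap_type_2} for $[\delta_1,\delta_2]$ at degree $a+b$, establishing closure.

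The only real obstacle is the careful bookkeeping of signs in verifying the cancellation of the cross terms; conceptually this is the same mechanism that makes $\Der(L)$ itself into a Lie superalgebra, and the computation is entirely parallel, the only difference being that the factor $(-1)^{|\delta||y|}$ sits in front of the first summand of \Cref{eq:homomap_type_2} rather than the second. Note that one might be tempted to transport the Lie superalgebra structure from $\Der(L)$ via the isomorphism of \Cref{thm:anti/der_iso}; however, that isomorphism does not intertwine the two supercommutators (a sign appears when both arguments are odd), so the direct calculation sketched above is the cleanest route.
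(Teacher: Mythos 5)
Your argument is correct, and the sign bookkeeping you describe does work out: expanding $\delta_1\delta_2([x,y])$ twice with \Cref{eq:homomap_type_2} gives
$(-1)^{(a+b)|y|}[\delta_1\delta_2(x),y]+(-1)^{b|y|}[\delta_2(x),\delta_1(y)]+(-1)^{a|y|+ab}[\delta_1(x),\delta_2(y)]+[x,\delta_1\delta_2(y)]$,
and after subtracting $(-1)^{ab}\delta_2\delta_1([x,y])$ the cross terms indeed cancel, leaving the type~2 identity for $[\delta_1,\delta_2]$ in degree $a+b$. This is, however, a genuinely different route from the paper's. The paper proves closure by passing through the correspondence with $\Der(L)$: writing $\delta_i(x)=(-1)^{|D_i||x|}D_i(x)$ with $D_i\in\Der(L)$ as in \Cref{prop:antiderivazione} and \Cref{thm:anti/der_iso}, a direct computation gives $[\delta_1,\delta_2](x)=(-1)^{|D_1||D_2|}(-1)^{|[D_1,D_2]||x|}[D_1,D_2](x)$; since $[D_1,D_2]$ is a superderivation, the right-hand side is a scalar multiple of a type~2 map, hence type~2. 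Note that the obstruction you raise in your final caveat---that the isomorphism does not intertwine the supercommutators when both arguments are odd---is real (the paper records it in the remark that $f$ is not a morphism of Lie superalgebras), but it does not invalidate the transport argument: for closure one only needs membership in the subspace $\mathcal{F}_2(L)$, which is stable under multiplication by the scalar $(-1)^{|D_1||D_2|}$. What each approach buys: yours is self-contained, needing nothing beyond the definition of $\mathcal{F}_2(L)$ and running exactly parallel to the classical proof that $\Der(L)$ is a Lie superalgebra; the paper's is shorter given the already-established correspondence and, as a by-product, exhibits precisely the sign defect that motivates the subsequent remark about $f$ and $g$.
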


\begin{proof}
Obviously, \([\delta_1,\delta_2]\) is still a linear homogeneous map of $L$. We should only prove the closure of the bracket, i.\ e.\ that \([\delta_1,\delta_2]\) lies in $\mathcal{F}_2(L)$. 

Let \(D_1,D_2\) the associated superderivations, i.e. \(\delta_i(x)=(-1)^{\vert D_i \vert \vert x \vert}D_i(x)\) for all \(x\in L\), with \(i=1,2\). Clearly it holds \(\vert \delta_i \vert = \vert D_i \vert \). Then,

\begin{align*}
       [\delta_1,\delta_2](x)&=\delta_1\delta_2(x)-(-1)^{\vert \delta_1 \vert \vert \delta_2 \vert}\delta_2 \delta_1(x)\\
    &=\delta_1((-1)^{\vert D_2 \vert \vert x \vert}D_2(x))- (-1)^{\vert D_1 \vert \vert D_2 \vert}\delta_2((-1)^{\vert D_1 \vert \vert x \vert} D_1(x)) \\
    &=(-1)^{\vert D_2 \vert \vert x \vert}\delta_1((D_2(x))- (-1)^{\vert D_1 \vert \vert D_2 \vert+\vert D_1 \vert \vert x \vert}
    \delta_2(D_1(x)) \\
    &=(-1)^{\vert D_2 \vert \vert x \vert}(-1)^{\vert D_1 \vert \vert D_2(x) \vert} D_1 D_2(x) 
    - (-1)^{\vert D_1 \vert \vert D_2 \vert+\vert D_1 \vert \vert x \vert}(-1)^{\vert D_2 \vert \vert D_1(x) \vert} D_2 D_1(x) \\
    &=(-1)^{\vert D_2 \vert \vert x \vert + \vert D_1 \vert (\vert D_2 \vert + \vert x \vert)} D_1 D_2(x) - (-1)^{\vert D_1 \vert \vert D_2 \vert+\vert D_1 \vert \vert x \vert  + \vert D_2 \vert (\vert D_1 \vert + \vert x \vert)} D_2 D_1(x) \\
    &=(-1)^{(\vert D_1 \vert + \vert D_2 \vert) \vert x \vert + \vert D_1 \vert \vert D_2 \vert } D_1 D_2(x) - (-1)^{(\vert D_1 \vert + \vert D_2 \vert) \vert x \vert + 2 \vert D_1 \vert \vert D_2 \vert} D_2 D_1(x) \\
    &=(-1)^{\vert D_1 \vert \vert D_2 \vert}
    (-1)^{(\vert D_1 \vert + \vert D_2 \vert) \vert x \vert} \left( D_1 D_2(x) - (-1)^{\vert D_1 \vert \vert D_2 \vert} D_2D_1(x)\right) \\
    &=(-1)^{\vert D_1 \vert \vert D_2 \vert}
    (-1)^{\vert [D_1,D_2] \vert \vert x \vert}  [D_1,D_2](x), \\
\end{align*}

for any $x\in L$. Since \([D_1,D_2]\) is a superderivation, then \(\delta_3(x)=(-1)^{\vert [D_1,D_2] \vert \vert x \vert} [D_1,D_2](x)\) is a map of type 2, as it is  his multiple \((-1)^{\vert D_1 \vert \vert D_2 \vert} \delta_3\) and this proves the statement.
\end{proof}

A natural question that arises from the previous result is whether \( f \) and \( g \) are isomorphisms of Lie superalgebras. Unfortunately, the answer is negative, as the following remark illustrates.

\begin{remark}
    We observe that, when one computes $f([D_1,D_2])$, with $D_1,D_2\in\Der(L)$, by the proof one obtains $(-1)^{\vert D_1 \vert \vert D_2 \vert}[f(D_1),f(D_2)]$. This shows that, in general, $f$ (and also $g$) is not an isomorphism of Lie superalgebras.
\end{remark}

We are now ready to study the space \(\mathcal{F}_2(L)\) when \(L\) is a complete Lie superalgebra, as defined in \Cref{def:complete_Lie_superalgebra}. Under this assumption, we will analyse the behaviour of these maps. Furthermore, we will apply this result to the study of superbiderivations, following \Cref{defn:superbiderivation_2}.

\begin{prop}
Let \(L\) be a complete Lie superalgebra. Then every map in \(\mathcal{F}_2(L)\) is inner.   
\end{prop}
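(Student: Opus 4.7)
The plan is to leverage the linear isomorphism $g\colon \mathcal{F}_2(L)\to\Der(L)$ constructed just before \Cref{thm:anti/der_iso}, together with the completeness hypothesis, to rewrite an arbitrary element of $\mathcal{F}_2(L)$ as a right-multiplication $R_z$. First, since $\mathcal{F}_2(L)=\mathcal{F}_2(L)_0\oplus\mathcal{F}_2(L)_1$, it suffices to prove the statement for homogeneous $\delta$ of a fixed degree $|\delta|$; the inner type 2 maps $R_x$ are closed under sums of homogeneous elements, so the general case follows by linearity.

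Next, given homogeneous $\delta\in\mathcal{F}_2(L)$, I would set $D:=g(\delta)\in\Der(L)$, which is homogeneous of degree $|D|=|\delta|$ and satisfies $\delta(x)=(-1)^{|\delta||x|}D(x)$ for every homogeneous $x\in L$. Since $L$ is complete, $D\in\IDer(L)$, and using the remark after \Cref{def:complete_Lie_superalgebra}, I can take $D=L_y$ for some homogeneous $y\in L$ with $|y|=|D|=|\delta|$.

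The final step is a short sign computation. For homogeneous $x\in L$,
\begin{equation*}
    \delta(x)=(-1)^{|\delta||x|}D(x)=(-1)^{|\delta||x|}[y,x]=-(-1)^{|\delta||x|}(-1)^{|y||x|}[x,y]=-[x,y]=[x,-y],
\end{equation*}
where I used \Cref{eq:super_skewsimmetry} and $|y|=|\delta|$, so the two signs collapse to $(-1)^{2|\delta||x|}=1$. Hence $\delta=R_{-y}$, which is a type 2 inner map of degree $|\delta|$, proving the claim.

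The only real subtlety is bookkeeping of the $\mathbb{Z}_2$-signs, specifically the cancellation $(-1)^{|\delta||x|}(-1)^{|y||x|}=1$ that crucially depends on the equality $|y|=|\delta|$ inherited from $|D|=|\delta|$; once this is handled, the completeness hypothesis does all the heavy lifting through the known description of inner superderivations.
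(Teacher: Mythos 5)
Your proposal is correct and follows essentially the same route as the paper: use the isomorphism with $\Der(L)$ to associate a superderivation to $\delta$, invoke completeness to write it as $L_t$ with $\vert t\vert=\vert\delta\vert$, and apply super-skewsymmetry so the signs cancel and $\delta=R_{-t}$, with the non-homogeneous case handled by decomposing into even and odd parts. The only cosmetic difference is that you reduce to the homogeneous case at the start, whereas the paper treats it at the end.
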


\begin{proof}
Let \(\delta \in \mathcal{F}_2(L)\) homogeneous. Then, by \Cref{thm:anti/der_iso} there exists a superderivation \(D\) such that 
\(\delta(x)=(-1)^{\vert D \vert \vert x \vert}D(x)\). Since \(L\) is complete, w.l.o.g. we know that there exists an element $t\in L$ such that \(D=L_t\). So \(\vert D \vert = \vert t \vert\), and

\begin{center}
    \(\delta(x)=(-1)^{\vert D \vert \vert x \vert}D(x)=(-1)^{\vert t \vert \vert x \vert}L_t(x)=
    (-1)^{\vert t \vert \vert x \vert}[t,x]=-[x,t]=[x,-t]\).   
\end{center}

The latter is equivalent to saying that \(\delta=R_{-t}\). Finally, if $\delta\in\mathcal{F}_2(L)$ is not homogeneous, then $\delta=\delta_0+\delta_1$, with $\delta_i\in\mathcal{F}_2(L)_i$, where $i=1,2$. Hence $\delta_i=R_{-t_i}$, for such a $t_i\in L_i$, and $\delta=R_{-t_0-t_1}$.
\end{proof}

\begin{thm}\label{thm:Teorema_complete}
Let \(L\) be a complete Lie superalgebra and \(B\) a homogeneous bilinear map of $L$ with degree \(\vert B \vert\). Then, $B$ is a superbiderivation of $L$ if and only if there exist two linear homogeneus maps \(\varphi,\psi\) of degree \(\vert B \vert\) such that

\begin{equation*}
    B(x,y)=[\varphi(x),y]=[x,\psi(y)],\,\,\text{ for all } x,y \in L.
\end{equation*}
\end{thm}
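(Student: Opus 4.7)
The plan is to peel apart the two axioms defining a superbiderivation in \Cref{defn:superbiderivation_2} and exploit completeness on each side independently.

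$(\Longrightarrow)$ Fix a homogeneous $x\in L$. The partial map $B(x,-)$ is linear, homogeneous of degree $\vert B\vert+\vert x\vert$, and \Cref{eq:superbiderivation_eq_1} is precisely the superderivation identity for it. Since $L$ is complete, $B(x,-) = L_{\varphi(x)}$ for some element $\varphi(x)\in L_{\vert B\vert+\vert x\vert}$; triviality of the center forces $\varphi(x)$ to be unique. Bilinearity of $B$, combined with this uniqueness, promotes $x\mapsto\varphi(x)$ to a linear homogeneous map $\varphi\colon L\to L$ of degree $\vert B\vert$ satisfying $B(x,y)=[\varphi(x),y]$. Dually, for fixed homogeneous $y$, \Cref{eq:superbiderivation_eq_2} identifies $B(-,y)$ with a type 2 map of degree $\vert B\vert+\vert y\vert$; the preceding proposition, which asserts that every element of $\mathcal{F}_2(L)$ is inner when $L$ is complete, yields $B(-,y)=R_{\psi(y)}$ for a unique $\psi(y)\in L_{\vert B\vert+\vert y\vert}$. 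This gives the second formula $B(x,y)=[x,\psi(y)]$.

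$(\Longleftarrow)$ Conversely, suppose $B(x,y)=[\varphi(x),y]=[x,\psi(y)]$ with $\varphi,\psi$ homogeneous linear of degree $\vert B\vert$. For each homogeneous $x$ the map $L_{\varphi(x)}$ is an inner superderivation of degree $\vert B\vert+\vert x\vert$; evaluating its defining identity at $[y,z]$ recovers \Cref{eq:superbiderivation_eq_1}. Similarly, for each homogeneous $z$ the map $R_{\psi(z)}$ is an inner type 2 map of degree $\vert B\vert+\vert z\vert$, and its defining identity evaluated at $[x,y]$ recovers \Cref{eq:superbiderivation_eq_2}.

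The only non-routine step is the promotion of the pointwise assignments $x\mapsto\varphi(x)$ and $y\mapsto\psi(y)$ to genuine linear maps; this relies crucially on $\Zz(L)=0$, which forces $\varphi(x)$ and $\psi(y)$ to be unique, so the bilinearity of $B$ transfers directly to the linearity of $\varphi$ and $\psi$. The rest is degree bookkeeping, automatic once one observes that $B(x,-)$ and $B(-,y)$ inherit the degrees $\vert B\vert+\vert x\vert$ and $\vert B\vert+\vert y\vert$ respectively.
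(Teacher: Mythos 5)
Your proposal is correct and follows essentially the same route as the paper: use completeness to identify $B(x,-)$ with $L_{\varphi(x)}$ and $B(-,y)$ with $R_{\psi(y)}$, use triviality of the center to make $\varphi,\psi$ well defined and linear, and read the converse off \Cref{defn:superbiderivation_1}. The only cosmetic difference is that you obtain the degree of $\varphi(x)$ and $\psi(y)$ directly from the fact that a homogeneous (super)derivation, respectively type 2 map, of a complete Lie superalgebra is inner by an element of the same degree, whereas the paper rederives this by decomposing $\varphi(x)=x_0+x_1$ and pushing the unwanted component into the center.
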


\begin{proof}
The "if" direction is the easiest to prove. Indeed, if there exist two homogeneous linear maps $\varphi,\psi\colon L\to L$ such that $B(x,y)=[\varphi(x),y]=[x,\psi(y)]$, then $B$ is a superbiderivation of $L$ (w.r.t. \Cref{defn:superbiderivation_2}) because $B(x,-)=L_{\varphi(x)}\in\Der(L)$ and $B(-,x)=R_{\psi(x)}\in\mathcal{F}_2(L)$.

Now we prove the "only if" direction. Let us consider a superbiderivation $B$ of $L$. Then, $B(x,-)$ (respectively $B(-,x)$) is a map of type $1$ (resp. type $2$) of $L$ of degree $ \vert B \vert + \vert x \vert$, for any $x\in L$. Since $L$ is complete, 
\(\mathcal{F}_1(L)\) (respectively \(\mathcal{F}_2(L)\)) has only left multiplications (resp. right multiplications). Thus, for every $x\in L$, there exist $u,v\in L$ such that $B(x,-)=L_u(-)=[u,-]$ and $B(-,x)=R_v(-)=[-,v]$. So we can define two maps $\varphi,\psi\colon L\to L$ in the following way:
\[
\varphi\colon x\mapsto u\quad\text{ and }\quad\psi\colon x\mapsto v.
\]
Now we have to prove that $\varphi$ and $\psi$ are homogeneous maps of degree $\vert B \vert $ .

\begin{itemize}
    \item Linearity: Let $x_1,x_2\in L$. We now consider the two elements $[\varphi(x_1+x_2),y]$ and $[\varphi(x_1)+\varphi(x_2),y]$, for any $y\in L$. By the definition of the map $\varphi$, and since $B$ is a bilinear map, we have $[\varphi(x_1+x_2),y]= [\varphi(x_1)+\varphi(x_2),y]$. This implies that $\varphi(x_1+x_2)-\varphi(x_1)-\varphi(x_2)\in\Zz(L)$. The center of $L$ is trivial because $L$ is complete, then $\varphi(x_1+x_2)=\varphi(x_1)+\varphi(x_2)$. In a similar way one can proves that \(\varphi(\lambda x)=\lambda \varphi(x) \), where $\lambda\in\F$. With the same arguments, $\psi$ is linear.

 \item Degree: Let \(\vert B \vert =0\) (the case 1 is analogous) and \(x \in L_0\). We have \(\varphi(x)=x_0 + x_1\), with \(x_i \in L_i\), \(i=0,1\). If \(y \in L_0\) then \([x_0,y]+[x_1,y]=[\varphi(x),y]=B(x,y)\in L_0\) since \(\vert B \vert=0\). \([x_0,y] \in L_0\) and \([x_1,y] \in L_1\), so \([x_1,y]=0\). With same arguments, if \(y \in L_1\), \([x_1,y]=0\); hence \(x_1\) lies in the center. So \(\varphi(x)=x_0 \in L_0\),  for all \(x \in L_0\), i.e \(\varphi(L_0) \subseteq L_0\). With similar computations \(\varphi(L_1) \subseteq L_1\). We can conclude that \(\varphi\) is an homogeneus map of degree \(0=\vert B \vert\). Similary also \(\psi\) has degree \(0=\vert B \vert\). The case \(\vert B\vert =1\) is analogous.
 
\end{itemize}

\end{proof}

\section{Linear supercommuting maps}\label{sec:linearcomm}

In this section, we propose an alternative definition of linear supercomputing maps for Lie superalgebras. Although such maps have already been defined in the literature, we will show that the existing definition is flawed. Consequently, a new definition is needed—one that aligns with the underlying structure of Lie superalgebras and, naturally, generalises the notion of linear supercomputing maps for Lie algebras, which can be seen as a special case of Lie superalgebras.

\subsection{Previous definitions of linear supercommuting maps}\label{subsec:issues}

Let $L$ be an arbitrary Lie superalgebra and let $f\colon L\to L$ be a linear map of $L$. In \cite{fan2017superbiderivations}, Fan and Dai say that $f$ is linear supercomputing on $L$ if  \([f(x),x]=0\), for every homogeneous $x\in L$.

\begin{remark}
    We want to emphasise that, if one would compute the bracket $[f(x+y),x+y]$ with a supercomputing map $f$ in order to obtain $0$, it is mandatory to consider homogeneous elements $x,y\in L$, with $\vert x \vert=\vert y \vert$. Were this not the case, we would have that $x+y$ is not homogeneous since $x\in L_\alpha$ and $y\in L_{\alpha+1}$.
\end{remark}

Let $x,y\in L$ homogeneous, with $\vert x \vert=\vert y \vert$. If a linear map $f$ satisfies the condition above, for such $x,y\in L$, then we have

\begin{align*}
[f(x+y),x+y]&=[f(x),x]+[f(x),y]+[f(y),x]+[f(y),y]\\
&=[f(x),y]+[f(y),x]\\
&=0.
\end{align*}

Hence, the following equations hold

\begin{equation*}
    [f(x),y]=-[f(y),x]=(-1)^{\vert f(y) \vert \vert x \vert} [x,f(y)].
\end{equation*}
 This last equation is clearly different from Equation (4.10) that appears in \cite{fan2017superbiderivations}. Moreover, even assuming that the map 
$f$ is homogeneous, we would still obtain

\begin{equation*}
    [f(x),y]=-[f(y),x]=(-1)^{\vert f(y) \vert \vert x \vert} [x,f(y)]=(-1)^{(\vert f \vert+\vert y \vert)\vert x \vert}[x,f(y)],
\end{equation*}
which nevertheless differs from Equation (4.10) in \cite{fan2017superbiderivations}. Now we are ready to show that, for example, Theorem 4.4 in \cite{fan2017superbiderivations} is not true, in general.  In particular, they consider the super-Virasoro algebra $S(s)$, with $s=0$ or $s=\frac{1}{2}$ with the even part spanned by \(\big\{L_i \, | \, i \in \Z\big\}\), the odd part spanned by \(\big\{G_k \, | \, k \in s+\Z\big\}\) and brackets

\begin{equation*}
    [L_i,L_j]=(j-i)L_{i+j} \qquad
    [L_i,G_j]=(j- \frac{i}{2})G_{i+j} \qquad
    [G_i,G_j]=2L_{i+j}
\end{equation*}

As a result, they conclude that all supercommuting maps of $S(s)$ have the form \(f(x)=\lambda x\), for every \(x\in L\) and \(\lambda\) a fixed element in $\F$. Unfortunately, this is not true.  Indeed, if we put $\lambda=1$ and we consider the bracket $[f(G_1),G_1]$, these imply that

\begin{equation*}
    [G_1,G_1]=2L_2\neq0.
\end{equation*}

One of the consequences of these outputs is that the statement Theorem 5.4 in \cite{Yuan2018} is also false. To show this, we consider the Lie superalgebra \(\mathfrak{sl}(1 \vert 2)\), with even part 

\begin{equation*}
    L_0=\left\{ \begin{pmatrix}
        a & 0 & 0\\
        0 & b & c \\
        0 & d & a-b
    \end{pmatrix} \, | \, a,b,c,d \in \C \right\}
\end{equation*}

and odd part

\begin{equation*}
    L_1=\left\{ \begin{pmatrix}
        0 & a & b\\
        c & 0 & 0 \\
        d & 0 & 0
    \end{pmatrix} \, | \, a,b,c,d \in \C \right\}
\end{equation*}

If we consider $\lambda=1$, and the element 

\begin{equation*}
    x=\begin{pmatrix}
         0 & 1 & 1 \\
        1 & 0 & 0 \\
        1 & 0 & 0 
    \end{pmatrix}
\end{equation*}

we have

\begin{align*}
    [f(x),x]&=[x,x]\\
    &=\begin{pmatrix}
        0 & 1 & 1 \\
        1 & 0 & 0 \\
        1 & 0 & 0
    \end{pmatrix}
    \begin{pmatrix}
        0 & 1 & 1 \\
        1 & 0 & 0 \\
        1 & 0 & 0
    \end{pmatrix}-(-1)^{\vert x \vert \vert x \vert}\begin{pmatrix}
        0 & 1 & 1 \\
        1 & 0 & 0 \\
        1 & 0 & 0
    \end{pmatrix}
    \begin{pmatrix}
        0 & 1 & 1 \\
        1 & 0 & 0 \\
        1 & 0 & 0
    \end{pmatrix}\\[.5em]
    &=\begin{pmatrix}
        2 & 0 & 0 \\
        0 & 1 & 1 \\
        0 & 1 & 1
    \end{pmatrix}+\begin{pmatrix}
        2 & 0 & 0 \\
        0 & 1 & 1 \\
        0 & 1 & 1
    \end{pmatrix}\\[.5em]
    &=\begin{pmatrix}
        4 & 0 & 0 \\
        0 & 2 & 2 \\
        0 & 2 & 2
    \end{pmatrix}\neq 0_{3\times 3}.
\end{align*}

\subsection{New definition of linear supercommuting maps}\label{subsec:new_definition_lincomm}

In literature, an important family of maps is the set of commuting maps. Let $A$ be an algebra over a field. A linear map $f\colon A\to A$ is called \emph{commuting map} if, for any $x,y\in A$,
    
    \begin{equation}\label{eq:defn_of_linear_comm_map1}
        f(x)y=xf(y).
    \end{equation}

For a Lie algebra $L$, if the field has not characteristic two, \Cref{eq:defn_of_linear_comm_map1} is equivalent to 

\begin{equation}
    [f(x),x]=0,
\end{equation}
for every $x\in L$.
As we say in \Cref{subsec:issues}, the conditions are not equivalent for a Lie superalgebra. Inspired by the original definition of an arbitrary algebra given by \Cref{eq:defn_of_linear_comm_map1}, the following definition is proposed.

\begin{definition}\label{defn:lin_comm_super}
    Let \(L\) be a Lie superalgebra. A linear map $f\colon L\to L$ is called \emph{linear supercommuting map} if 
    \begin{equation}\label{eq:defn_of_linear_comm_map2}
        [f(x),y]=[x,f(y)],
    \end{equation}
    for any $x,y\in L$.
\end{definition}

We immediately observe that the elements involved in the last definition are not necessarily homogeneous. This approach ensures that the family of linear maps under consideration is as general as possible.
Due to the superalgebraic structure of $L$, it naturally follows that $f$ can be decomposed into an even part and an odd part. We consider the projections \(\pi_i \colon L \to L_i\) in the subspace \(L_i\), with \(i=0,1\), that is $\pi_i(x)=\pi_i(x_0+x_1)=x_i$. Then, for every $x\in L$, we have
\begin{center}
    \(f(x)=\pi_0 f(x) + \pi_1 f(x)= \pi_0 f(\pi_0(x) + \pi_1(x)) + \pi_1 f(\pi_0(x) + \pi_1(x))=f_0(x)+f_1(x) \),
\end{center}

where \(f_0(x)=\pi_0 f \pi_0 (x) + \pi_1 f \pi_1 (x) \) and \(f_1(x)=\pi_1 f \pi_0 (x) + \pi_0 f \pi_1 (x)\). The maps \(f_0\) and \(f_1\) are also linear, and \(f_j(L_i) \subseteq L_{i+j}\), for all \(i,j\in\mathbb{Z}_2\). Hence, $f_0$ and $f_1$
  are homogeneous, and—as stated in the following proposition—they also supercommute.

\begin{prop}\label{prop:supercomm_decomposition}
Let \(f=f_0+f_1\) be a linear supercommuting map, with $f_0$ and $f_1$ written as above. Then $f_0$ and $f_1$ are linear supercommuting maps.
\end{prop}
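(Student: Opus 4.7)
The plan is to reduce everything to a degree-by-degree comparison using the $\mathbb{Z}_2$-gradation of $L$. The key observation is that $f_0$ has degree $0$ and $f_1$ has degree $1$, so for any homogeneous $x,y \in L$ the terms $[f_0(x),y]$ and $[x,f_0(y)]$ live in a different graded component of $L$ than $[f_1(x),y]$ and $[x,f_1(y)]$. Since $L = L_0 \oplus L_1$ is a direct sum, components can be equated independently.

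First I would reduce to homogeneous elements. By bilinearity of the bracket and linearity of $f_0$ and $f_1$, writing $x = x_0 + x_1$ and $y = y_0 + y_1$ with $x_i, y_i \in L_i$, the identity $[f_i(x),y] = [x, f_i(y)]$ follows from the corresponding identities $[f_i(x_\alpha), y_\beta] = [x_\alpha, f_i(y_\beta)]$ for all $\alpha,\beta \in \mathbb{Z}_2$. So it is enough to check the supercommuting identity on homogeneous arguments.

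Next, fix homogeneous $x \in L_\alpha$ and $y \in L_\beta$. The hypothesis that $f$ supercommutes gives
\begin{equation*}
    [f_0(x),y] + [f_1(x),y] = [x,f_0(y)] + [x,f_1(y)].
\end{equation*}
Since $f_0$ is homogeneous of degree $0$, both $[f_0(x),y]$ and $[x,f_0(y)]$ belong to $L_{\alpha+\beta}$; since $f_1$ is homogeneous of degree $1$, both $[f_1(x),y]$ and $[x,f_1(y)]$ belong to $L_{\alpha+\beta+1}$. The directness of $L_0 \oplus L_1$ then forces
\begin{equation*}
    [f_0(x),y] = [x,f_0(y)] \quad \text{and} \quad [f_1(x),y] = [x,f_1(y)],
\end{equation*}
which is exactly the supercommuting condition for $f_0$ and $f_1$ on homogeneous arguments. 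Combining with the first step, $f_0$ and $f_1$ are linear supercommuting maps on all of $L$.

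There is no real obstacle here: the argument is a clean consequence of the $\mathbb{Z}_2$-grading together with bilinearity. The only place one must be careful is in justifying the component-by-component comparison, which relies on the fact that $f_0$ and $f_1$ as constructed via the projections $\pi_0,\pi_1$ are genuinely homogeneous of degrees $0$ and $1$ respectively, as was already noted just before the statement.
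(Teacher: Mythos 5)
Your argument is correct and is essentially the same as the paper's: both reduce to homogeneous $x\in L_\alpha$, $y\in L_\beta$, split $[f(x),y]=[x,f(y)]$ into the components $[f_0(x),y]+[f_1(x),y]=[x,f_0(y)]+[x,f_1(y)]$, and use the fact that the degree-$0$ and degree-$1$ parts lie in the distinct summands $L_{\alpha+\beta}$ and $L_{\alpha+\beta+1}$ to equate them separately, then extend to all of $L$ by linearity since homogeneous elements generate $L$. No gaps; your explicit justification that $f_0,f_1$ are homogeneous of degrees $0,1$ matches the remark preceding the proposition.
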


\begin{proof}
Let \(x \in L_{\alpha}, y \in L_{\beta}\). Then
\([f(x),y]=[x,f(y)]\), i.e. 

\begin{equation}\label{eq:eq:supercomm_decomposition}
   [f_0(x),y]+[f_1(x),y]=[x,f_0(y)]+[x,f_1(y)].
\end{equation}

 The elements $[f_0(x),y]$ and $[f_1(x),y]$ are linearly independent since \([f_0(x),y] \in L_{\alpha + \beta}\) and \([f_1(x),y] \in L_{\alpha + \beta +1}\). Moreover, $[x,f_0(y)]$ and $[x,f_1(y)]$ are linearly indipendent. By \Cref{eq:eq:supercomm_decomposition}  follows that \([f_i(x),y]=[x,f_i(y)]\) for any homogeneous \(x,y \in L\), for \(i=0,1\). Since the homogeneous elements are a set of generators of \(L\), \([f_i(x),y]=[x,f_i(y)]\), for all \(x,y \in L\).
\end{proof}

In the next section, we will apply the results and definitions that have been established thus far to the study of linear supercomputing maps of the Lie superalgebra of derivations of the Heisenberg superalgebra.

\section{Applications }\label{sec:applications}

\subsection{The Lie superalgebra of superderivations of the Heisenberg superalgebra}

Let us start with the definition of Heisenberg superalgebra given by L.Y. Wang and D.J. Meng in \cite{Wang2003}. A Lie superalgebra \(L=L_0 \oplus L_1\) over the complex field $\mathbb{C}$ is said to be a \textit{Heisenberg superalgebra} if it satisfies the following conditions:

    \begin{itemize}
        \item \([L,L]=L_0=\Zz(L).\)
        \item  \(\dim L_0=1\).
    \end{itemize}

In this section, $L$ will denote the $n+1$-dimensional Heisenberg superalgebra, where $\dim L_1=n$. By these two conditions, let $\mathcal{B}=\left\{x_1,\ldots,x_n,c\right\}$ be a basis of $L$. If \(L_0=\langle c\rangle\) and \(L_1=\langle x_1,\ldots,x_n\rangle\), then there exists a 
 symmetric bilinear form \(\beta\colon L \times L \to \mathbb{F} \) such that \([x,y]=\beta(x,y) c\), for every \(x,y\) in \(L\). Since \(\beta\) is symmetric, we can choose \(x_i\in L_1\) such that \(\beta(x_i,x_j)=\delta_{ij}\), for every $1\leq j\leq n$. Let \(N\in M_{n+1}(\mathbb{C)}\) be the matrix associated to $\beta$ with respect to the basis $\mathcal{B}$, that is the matrix

\begin{equation*}
    N=\begin{pmatrix}
         I_n & 0_{n\times 1} \\
         0_{1\times n} & 0.
     \end{pmatrix}
\end{equation*}

We now aim to compute the derivation algebra of \(L\). 
Since we adopt a different basis from the one used in \cite{Wang2003} 
to prove Lemmas 1.2 and 1.3, we provide an alternative proof of the same result.

\begin{prop}
  Let \(D\) be a superderivation of the Heisenberg superalgebra L. Then, respect to the basis $\mathcal{B}$, the matrix $M$ associated to $D$ has the form

  \begin{equation*}
      M=\begin{pmatrix}
          \lambda I_n + B & 0 \\
          a & 2\lambda
      \end{pmatrix},
  \end{equation*}

where \(B\) is a \(n \times n\) skew-symmetric matrix, \(\lambda \in \mathbb{C}\), and \(a\in\mathbb{C}^n\equiv M_{1\times n}(\mathbb{C})\). 
\end{prop}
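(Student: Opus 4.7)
The plan is to decompose $D = D_0 + D_1$ into its even and odd parts and determine the matrix of each with respect to $\mathcal{B}$. Since the $\Z_2$-grading is compatible with the superderivation equation, both $D_0$ and $D_1$ are individually superderivations, so it suffices to show that each has the desired block form (with $D_0$ contributing the $\lambda I_n + B$ and $2\lambda$ entries, and $D_1$ contributing the row vector $a$).

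For the even part: $D_0$ preserves the grading, so we may write $D_0(c) = \mu c$ and $D_0(x_j) = \sum_i A_{ij} x_i$ for some $\mu \in \C$ and matrix $A = (A_{ij})$. Applying \Cref{eq:superderivation} with $\vert D_0\vert = 0$ to the pair $(x_i, x_j)$ and using $[x_i, x_j] = \delta_{ij} c$ gives
\begin{equation*}
    \mu \delta_{ij}\, c = D_0([x_i,x_j]) = [D_0(x_i), x_j] + [x_i, D_0(x_j)] = (A_{ji} + A_{ij})\, c,
\end{equation*}
so $A_{ij} + A_{ji} = \mu \delta_{ij}$. Setting $\lambda := \mu/2$ gives $\mu = 2\lambda$, and $B := A - \lambda I_n$ satisfies $B_{ij} + B_{ji} = 0$, i.e.\ $B$ is skew-symmetric.

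For the odd part: $D_1$ swaps the grading, so write $D_1(c) = \sum_i b_i x_i$ and $D_1(x_j) = a_j c$. Applying \Cref{eq:superderivation} with $\vert D_1\vert = 1$ to the pair $(x_i, x_j)$ gives
\begin{equation*}
    \delta_{ij} \sum_k b_k x_k = D_1([x_i,x_j]) = [D_1(x_i), x_j] - [x_i, D_1(x_j)] = [a_i c, x_j] - [x_i, a_j c] = 0,
\end{equation*}
because $c$ is central. Taking $i = j$ forces $b_k = 0$ for all $k$, so $D_1(c) = 0$ and $D_1$ is entirely described by the row $a = (a_1, \ldots, a_n)$.

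Assembling the two blocks in the ordered basis $\mathcal{B} = \{x_1, \ldots, x_n, c\}$ yields the stated matrix
\begin{equation*}
    M = \begin{pmatrix} \lambda I_n + B & 0 \\ a & 2\lambda \end{pmatrix}.
\end{equation*}
There is no real obstacle; the only point requiring care is the sign convention in \Cref{eq:superderivation} when $\vert D_1 \vert = 1$ and the renormalization $\mu = 2\lambda$ that accounts for the factor of $2$ appearing on the diagonal entry corresponding to $c$.
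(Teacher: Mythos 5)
Your proposal is correct and follows essentially the same route as the paper: split $D$ by parity, compute the superderivation identity on the pairs $(x_i,x_j)$ with $[x_i,x_j]=\delta_{ij}c$ to get $A+A^t=\mu I_n$ (the paper writes this as the matrix identity $\lambda N=M^tN+NM$), and observe that in the odd case the identity is vacuous because $D_1(x_j)$ is central, leaving $a$ unconstrained. The only cosmetic difference is that you obtain $D_1(c)=0$ by evaluating on $(x_i,x_i)$, whereas the paper deduces it from $D([L,L])\subseteq[L,L]$ together with the grading; both are valid.
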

 
\begin{proof}
We will prove this result by considering different cases based on the degree of $D$.

\begin{itemize}
    \item If \(D\) has degree 0, then $M$ has the form

\begin{equation*}
    \begin{pmatrix}
            A & 0_{n\times 1} \\
            0_{1\times n} & \lambda
        \end{pmatrix}.
\end{equation*}

    On one hand, for any $x,y\in L$, we have \(D([x,y])=\beta(x,y)D(c)=\lambda\beta(x,y)c\). On the other hand, \([D(x),y]+[x,D(y)]=
    (\beta(D(x),y)+\beta(x,D(y)))c\). So we have

\begin{equation*}
    \lambda\beta(x,y)=\beta(D(x),y)+\beta(x,D(y)),
\end{equation*}
for every $x,y\in L$. This is equivalent to the matrix equation \(\lambda N= M^tN+NM\), i.e.

\begin{equation*}
    \begin{pmatrix}
                \lambda I_n & 0_{n\times1} \\
                0_{1\times n} & 0
                \end{pmatrix} = \begin{pmatrix}
                A^t+A & 0_{n\times1} \\
                0_{1\times n} & 0
            \end{pmatrix}.
\end{equation*}

    The last equation implies that \(A_{ij}=-A_{ji}\) for \(i \neq j\), \(A_{ii}=\frac{\lambda}{2}\) for every \(1\leq i\leq n\).

    \item If \(D\) has degree 1, then $M$ has the form

    \begin{equation*}
        \begin{pmatrix}
            0_n & b \\
            a & 0
        \end{pmatrix},
    \end{equation*}

   where $a\in M_{1\times n}(\mathbb{C})$ and $b\in M_{n\times 1}(\mathbb{C})$.  Since the subspace $[L,L]$ is $D$-invariant, i.e. \(D([L,L]) \subseteq [L,L]=\langle c\rangle\), thus \(D(c)=0_L\) and $b=0_{M_{n\times 1}(\mathbb{C})}$. The image subspace of \(D\) is always contained in the center of $L$, then the bracket $[D(x),c]=0_L$, for any $x\in L$, and then we do not have any constraints on $a$.
\end{itemize}   
\end{proof}

By Theorem 1.5 in \cite{Wang2003}, we know that the Lie superalgebra of superderivations of a Heisenberg superalgebra is complete. If we denote by \(e_{ij}\) the matrix with 1 in position \((i,j)\) and 0 otherwise, a basis of the superalgebra is given by \(\big\{A,B_{ij},C_i\big\}\), where
\(A=\sum_{k=1}^n e_{kk}+2e_{n+1,n+1}\),
\(B_{ij}=e_{ij}-e_{ji}\) for \(1\leq i<j\leq n\), and
\(C_i=e_{n+1,i}\) for \(1\leq i\leq n\). To simplify the computations, from now on, we consider \(B_{ij}=e_{ij}-e_{ji}\), for every $1\leq i,j\leq n$, with \(i\neq j\). The set \(\big\{A,B_{ij},C_i\big\}\) is still a set of generators and \(B_{ij}=-B_{ji}\). The elements \(A\) and \(B_{ij}\) have degree 0, while \(C_i\) have degree 1. In other words,

\begin{equation*}
    \Der_0(L)=\langle A, B_{ij} \rangle \quad\text{ and }\quad \Der_1(L)=\langle C_i \rangle.
\end{equation*}

The non-zero brackets of $\Der(L)$ are given by

\begin{align*}
&[A,C_i]=-[C_i,A]=C_i \\  
&[B_{ij},B_{kl}]=-[B_{kl},B_{ij}]=
    \delta_{jk}B_{il}+
    \delta_{il}B_{jk}
    +\delta_{jl}B_{ki}
    +\delta_{ki}B_{lj}\\
&[B_{ij},C_k]=-[C_k,B_{ij}]=
    \delta_{kj}C_i-\delta_{ki}C_j
\end{align*}

\begin{prop}\label{prop:VarPhigrade0}
    Let $n\geq3$ and let \(\varphi, \psi\colon \Der(L)\to\Der(L)\) be two homogeneous maps of degree $0$ such that
    
    \begin{equation}\label{eq:complex_num_homo0}
        [\varphi(x),y]=[x,\psi(y)],
    \end{equation}
    for every $x,y\in \Der(L)$. Then there exist a complex number $\lambda$ such that

    \begin{center}
   \(\varphi(A)=\lambda A, \)
    \, \, \(\varphi(B_{ij})=\lambda B_{ij}\),
    \, \,\(\varphi(C_i)=\lambda C_i\),

\vspace{2mm}

    \(\psi(A)=\lambda A\),
    \, \,\(\psi(B_{ij})=\lambda B_{ij}\),
    \, \,\(\psi(C_i)=\lambda C_i\).
        
\end{center}
\end{prop}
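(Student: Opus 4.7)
The plan is to exploit the degree-$0$ hypothesis, which forces $\varphi$ and $\psi$ to preserve the grading $\Der(L) = \Der_0(L) \oplus \Der_1(L) = \langle A, B_{ij}\rangle \oplus \langle C_k\rangle$, and then to test the identity $[\varphi(x), y] = [x, \psi(y)]$ on pairs drawn from the explicit basis. Two structural facts will do the heavy lifting: the subalgebra $\langle B_{ij}\rangle$ is isomorphic to $\mathfrak{so}(n)$, which is centerless for $n \geq 3$; and the element $A$ satisfies $[A, C_k] = C_k$ while commuting with every element of $\Der_0(L)$. I would proceed in four steps, collapsing the images of $A$, then $C_k$, then $B_{ij}$, and finally equating the two scalars.

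First I would write $\varphi(A) = \alpha A + u$ with $u \in \langle B_{ij}\rangle$ and test with $y = B_{ij}$. Both sides of $[\alpha A + u, B_{ij}] = [A, \psi(B_{ij})]$ lose their $A$-parts since $[A,-]=0$ on $\Der_0(L)$, leaving $[u, B_{ij}] = 0$ for every pair $(i, j)$. Hence $u$ lies in the center of $\mathfrak{so}(n)$, which is trivial for $n \geq 3$, so $\varphi(A) = \alpha A$; the symmetric test with $x = B_{ij}$, $y = A$ gives $\psi(A) = \alpha' A$.

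Next, the choice $(x, y) = (A, C_k)$ reduces the identity to $\alpha C_k = [A, \psi(C_k)] = \psi(C_k)$, because $\psi(C_k)$ lies in $\langle C_j\rangle$ and $\ad A$ acts as the identity there. Thus $\psi(C_k) = \alpha C_k$, and the mirror test yields $\varphi(C_k) = \alpha' C_k$. Then I would exploit $(x, y) = (B_{ij}, C_k)$: substituting $\psi(C_k) = \alpha C_k$ gives $[\varphi(B_{ij}), C_k] = \alpha(\delta_{kj} C_i - \delta_{ki} C_j)$. Expanding the generic $\varphi(B_{ij}) = a_{ij} A + \sum_{p<q} b_{ij, pq} B_{pq}$ and comparing the coefficient of each $C_m$ separately in the cases $k = i$, $k = j$, and $k \notin \{i, j\}$ forces $a_{ij} = 0$, $b_{ij, ij} = \alpha$, and all remaining $b_{ij, pq}$ to vanish; the third case, which kills $b_{ij, pq}$ whenever $\{p, q\} \neq \{i, j\}$, requires some $k \in \{1, \dots, n\} \setminus \{i, j\}$, and this is where $n \geq 3$ reappears. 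The symmetric test produces $\psi(B_{ij}) = \alpha' B_{ij}$.

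Finally, to identify the two scalars, pick any pair $(i, j), (k, l)$ with $[B_{ij}, B_{kl}] \neq 0$ (for $n \geq 3$ such pairs exist, e.g. $[B_{12}, B_{23}] = B_{13}$). The identity yields $\alpha [B_{ij}, B_{kl}] = \alpha' [B_{ij}, B_{kl}]$, forcing $\alpha = \alpha'$; setting $\lambda := \alpha = \alpha'$ then completes the proof. The principal obstacle I anticipate is the bookkeeping in the third step: the expansion of $[\varphi(B_{ij}), C_k]$ contains many coefficients, and the argument must carefully isolate which specific $b_{ij, pq}$ is killed by which choice of $k$, handling cleanly the sign convention $B_{pq} = -B_{qp}$. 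The hypothesis $n \geq 3$ is used essentially three times—once in each of Steps 1, 3, and 4—and all three uses are tight.
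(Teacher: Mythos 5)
Your proof is correct and follows essentially the same route as the paper's: expand the images of $A$, $B_{ij}$, $C_k$ in the basis of $\Der(L)$ and compare coefficients in $[\varphi(x),y]=[x,\psi(y)]$ evaluated on basis pairs, with $n\geq 3$ entering exactly where you indicate. The only cosmetic difference is that you get $\varphi(A)=\alpha A$ and $\psi(A)=\alpha' A$ from the pairs $(A,B_{ij})$ and $(B_{ij},A)$ via the centerlessness of $\langle B_{ij}\rangle\cong\mathfrak{so}(n)$, whereas the paper extracts the same facts from the pairs $(A,C_r)$ and $(C_r,A)$ using the relation $B_{ij}=-B_{ji}$.
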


\begin{proof}
Since $\varphi$ and $\psi$ have degree $0$, we have

\begin{align}
    \varphi(A)&=\alpha A + \sum_{\substack{i\neq j \\ i,j=1}}^n  \alpha_{ij} B_{ij} \label{eq:varA}\\[1em]
    \varphi(B_{ij})&=\beta_{ij} A + \sum_{\substack{k \neq l \\ k,l=1}}^n \beta_{kl}^{ij} B_{kl}\label{eq:varB}\\[1em]
    \varphi(C_i)&=\sum_{j=1}^{n} \epsilon_{ij}C_j\label{eq:varC}\\[1em]
    \psi(A)&=\lambda A + \sum_{\substack{i\neq j \\ i,j=1}}^n \lambda_{ij} B_{ij}\label{eq:psiA}\\[1em]
    \psi(B_{ij})&=\mu_{ij} A + \sum_{\substack{k \neq l \\ k,l=1}}^n \mu_{kl}^{ij} B_{kl}\label{eq:psiB}\\[1em]
    \psi(C_i)&=\sum_{j=1}^n \sigma_{ij}C_j\label{eq:psiC},
\end{align}
where $\alpha,\alpha_{ij},\beta_{ij},\beta_{kl}^{ij},\epsilon_{ij},\lambda,\lambda_{ij},\mu_{ij},\mu_{kl}^{ij},\sigma_{ij}\in\mathbb{C}$.

Applying \Cref{eq:complex_num_homo0} to the elements $B_{ij}$ and $C_r$, i.e. $[\varphi(B_{ij}),C_r]=[B_{ij},\psi(C_r)]$, we therefore obtain
\begin{equation*}
    \beta_{ij} C_r + \sum_{\substack{k\neq l \\ k,l=1}}^n \beta_{kl}^{ij} [B_{kl},C_r]= \sum_{t=1}^n \sigma_{rt} [B_{ij},C_t].
\end{equation*}
Given that the brackets involved arise from the supercommutators of matrices, we obtain

\begin{equation}\label{eq:complex_num_homo0_1}
    \beta_{ij} C_r + \sum_{\substack{k \neq r \\ k=1}}^n \beta_{kr}^{ij} C_k - \sum_{\substack{l \neq r \\ l=1}}^n \beta_{rl}^{ij} C_l =
\sigma_{rj}C_i - \sigma_{ri}C_j.
\end{equation}

Since $n\geq3$, there exists \(r \neq i,j\) and, by the linear independence of the matrices $\left\{C_1,\ldots,C_n\right\}$ and the last equation, it follows that $\beta_{ij}=0$ and \Cref{eq:complex_num_homo0_1} became
\begin{equation}\label{eq:complex_num_homo0_2}
   \sum_{\substack{k \neq r \\ k=1}}^n \beta_{kr}^{ij} C_k - \sum_{\substack{l \neq r \\ l=1}}^n \beta_{rl}^{ij} C_l =
\sigma_{rj}C_i - \sigma_{ri}C_j.
\end{equation}
 If we now consider $r=i$, by \Cref{eq:complex_num_homo0_2} it follows $\sigma_{ij}=0$, for every $1\leq i,j\leq n$, with $i\neq j$.
 If instead we consider \(r \neq i,j\) and \(s \neq r\), \Cref{eq:complex_num_homo0_2} gives the condition became

\begin{equation}\label{eq:complex_num_homo0_3}
   \sum_{\substack{k \neq r \\ k=1}}^n \beta_{kr}^{ij} C_k - \sum_{\substack{l \neq r \\ l=1}}^n \beta_{rl}^{ij} C_l = 0_{n+1},
\end{equation}
 and then $\beta_{sr}^{ij}=\beta_{rs}^{ij}$. So, since \(B_{ij}=-B_{ji}\), then \(\varphi(B_{ij})=\sum_{k<l} (\beta_{kl}^{ij}-\beta_{lk}^{ij})B_{kl}=(\beta_{ij}^{ij}-\beta_{ji}^{ij})B_{ij}\). 
Applying \Cref{eq:complex_num_homo0} to the elements $C_r$ and $B_{ij}$, i.e. $[\varphi(C_r),B_{ij}]=[C_r,\psi(B_{ij})]$, with similar computations, we rewrite \Cref{eq:varB,eq:varC,eq:psiB,eq:psiC}, which are

\begin{align}
    \varphi(B_{ij})&=\tilde{\beta}_{ij} B_{ij} \label{eq:varB1}\\
    \varphi(C_i)&=\tilde{\epsilon}_iC_i\label{eq:varC1}\\
    \psi(B_{ij})&=\tilde{\mu}_{ij}B_{ij}\label{eq:psiB1}\\
    \psi(C_i)&=\tilde{\sigma}_i C_i\label{eq:psiC1},
\end{align}
with $\tilde{\beta}_{ij}=\beta_{ij}^{ij}-\beta_{ji}^{ij}$, $\tilde{\epsilon}_i=\epsilon_{ii}$, $\tilde{\mu}_{ij}=\mu_{ij}^{ij}-\mu_{ji}^{ij}$, and $\tilde{\sigma}_i=\sigma_{ii}$.

Now, applying \Cref{eq:complex_num_homo0} to \(A\) and \(C_r\), i.e. \([\varphi(A),C_r]=[A,\psi(C_r)]\), we get

\begin{equation*}
  \alpha C_r + \sum_{\substack{k \neq l \\ k,l=1}}^n \alpha_{kl} [B_{kl},C_r]= \tilde{\sigma}_r C_r.
\end{equation*}

\noindent Again, the brackets are given by the supercommutator of matrices, and therefore we obtain

\begin{equation*}
  \alpha C_r + \sum_{\substack{k \neq r \\ k=1}}^n \alpha_{kr} C_k -\sum_{\substack{l \neq r \\ l=1}}^n \alpha_{rl} C_l= \tilde{\sigma}_r C_r
\end{equation*}

Since the matrices $\left\{C_1,\ldots,C_n\right\}$ are linearly independent, we obtain \(\tilde{\sigma}_r=\alpha\) and \(\alpha_{sr}=\alpha_{rs}\) for every \(r,s\in\left\{1,\ldots,n\right\}\), with $r\neq s$. Since \(B_{ij}=-B_{ji}\), we get
\(\varphi(A)=\alpha A\). Similary, if we conceirn to \(C_r\) and \(A\) and we apply \Cref{eq:complex_num_homo0}  i.e. \([\varphi(C_r),A]=[C_r,\psi(A)]\), we obtain \(\tilde{\epsilon}_i=\lambda\) and \(\lambda_{ij}=\lambda_{ji}\) for every \(i,j\in\left\{1,\ldots,n\right\}\).
Therefore we rewrite \Cref{eq:varA,eq:psiA} and \Cref{eq:varB1,eq:varC1,eq:psiB1,eq:psiC1}:

\begin{align*}
   \varphi(A)&=\alpha A \\
    \varphi(B_{ij})&=\tilde{\beta}_{ij} B_{ij}\\
    \varphi(C_i)&=\lambda C_i\\
    \psi(A)&=\lambda A\\
    \psi(B_{ij})&=\tilde{\mu}_{ij}B_{ij}\\
    \psi(C_i)&=\alpha C_i.
\end{align*}
   
\vspace{2mm}

By considering the brackets \([\varphi(B_{ij}),C_i]=[B_{ij},\psi(C_i)]\) we obtain \(\tilde{\beta}_{ij}=\alpha\), and by 
\([\varphi(C_i),B_{ij}]=[C_i,\psi(B_{ij})]\) we obatin
\(\tilde{\mu}_{ij}=\lambda\). Lastly, from the brackets \([\varphi(B_{ij}),B_{jl}]=[B_{ij},\psi(B_{jl})]\)  we have \(\alpha=\lambda\) and the proof is complete.

\end{proof}

\begin{prop}\label{prop:VarPhigrade1}
Let $n\geq3$ and let \(\varphi, \psi\colon \Der(L)\to\Der(L)\) be two homogeneous maps of degree $1$ that satisfies \Cref{eq:complex_num_homo0}. Then \(\varphi=\psi=0\).

\end{prop}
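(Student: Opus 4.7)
The plan is to parametrise $\varphi$ and $\psi$ on the basis $\{A, B_{ij}, C_r\}$ of $\Der(L)$ and then successively feed the identity $[\varphi(x),y]=[x,\psi(y)]$ into well-chosen pairs of basis elements. Since both maps have degree $1$, they swap $\Der_0(L)=\langle A,B_{ij}\rangle$ and $\Der_1(L)=\langle C_r\rangle$, so I write
$\varphi(A) = \sum_i a_i C_i$, $\varphi(B_{ij}) = \sum_k b_k^{ij} C_k$, $\varphi(C_r) = \alpha_r A + \sum_{k<l} b_{kl}^r B_{kl}$,
and analogously for $\psi$ with unknowns $p_i$, $q_k^{ij}$, $\gamma_r$, $d_{kl}^r$.

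A first pass, applying the identity to the pairs $(A,A)$, $(A,B_{ij})$, and $(B_{ij},A)$ together with the brackets $[A,C_k] = C_k$ and $[B_{ij},C_k] = \delta_{kj} C_i - \delta_{ki} C_j$, yields $p_k = -a_k$, $\psi(B_{ij}) = -a_j C_i + a_i C_j$, and $\varphi(B_{ij}) = a_j C_i - a_i C_j$. So every unknown in the $\Der_0$-sector is controlled by the single vector $(a_k)$. Plugging these expressions into the identity for the pair $(B_{ij}, B_{kl})$ and specialising the four indices---for instance $(i,j,k,l) = (1,2,3,1)$ and $(1,2,3,2)$, which is legitimate since $n \geq 3$---produces relations of the form $-a_2 C_3 = -a_3 C_2$ and $a_1 C_3 = a_3 C_1$, hence $a_1 = a_2 = a_3 = 0$; an analogous specialisation kills every $a_m$ with $m > 3$. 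The conclusion of this stage is $\varphi(A) = \psi(A) = 0$ and $\varphi(B_{ij}) = \psi(B_{ij}) = 0$ for all $i,j$.

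With the $\Der_0$-sector annihilated, the identity applied to $(B_{ij}, C_r)$ collapses to $[B_{ij}, \psi(C_r)] = 0$ for every $i,j$. Since $[B_{ij}, A] = 0$ and the matrices $B_{ij}$ span $\mathfrak{so}(n)$, whose center is trivial for $n\geq 3$, the skew-symmetric part of $\psi(C_r)$ must vanish, so $\psi(C_r) = \gamma_r A$; the symmetric computation with $(C_r, B_{ij})$ gives $\varphi(C_r) = \alpha_r A$. Finally, the pair $(C_r, C_s)$ with $r \neq s$---available since $n \geq 3$---gives $\alpha_r C_s = -\gamma_s C_r$, and linear independence of $C_r, C_s$ forces $\alpha_r = 0 = \gamma_s$. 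Hence $\varphi = \psi = 0$.

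The main obstacle I anticipate is the bookkeeping in the $(B_{ij}, B_{kl})$ step: the resulting four-indexed equation expands on the free family $\{C_i, C_j, C_k, C_l\}$, and one must pick specialisations carefully so that the two sides decouple into equations pinning down the individual $a_k$'s. This is precisely where the hypothesis $n \geq 3$ is indispensable---for $n = 2$ the identity applied to $(B_{12}, B_{12})$ is automatically satisfied and places no constraint on $(a_1, a_2)$, so the argument genuinely breaks down in that dimension.
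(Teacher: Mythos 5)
Your proof is correct and follows essentially the same route as the paper's: parametrise $\varphi,\psi$ on the basis $\{A,B_{ij},C_r\}$ and evaluate $[\varphi(x),y]=[x,\psi(y)]$ successively on the pairs $(A,A)$, $(A,B_{ij})$, $(B_{ij},A)$, $(B_{ij},B_{kl})$, $(B_{ij},C_r)$ and $(C_r,C_s)$, using $n\geq 3$ exactly where the paper does. The only cosmetic difference is that you dispose of the $B$-part of $\varphi(C_r),\psi(C_r)$ by invoking the triviality of the centre of the subalgebra $\langle B_{ij}\rangle\cong\mathfrak{so}(n)$, whereas the paper performs the explicit coefficient computation showing the relevant coefficients are symmetric and hence cancel; both arguments are valid.
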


\begin{proof}
Since $\varphi$ and $\psi$ have degree $1$, we have

\begin{align}
    \varphi(A)&=\sum_{k=1}^n\alpha_{k} C_{k}\label{eq:varA2}\\[1em]
    \varphi(B_{ij})&=\sum_{k=1}^n \beta_{k}^{ij} C_{k}\label{eq:varB2}\\[1em]
    \varphi(C_i)&=\delta_i A + \sum_{\substack{k \neq l \\ k,l=1}}^n \epsilon_{kl}^i B_{kl}  \label{eq:varC2}\\[1em]
    \psi(A)&=\sum_{k=1}^n\gamma_{k} C_{k} \label{eq:psiA2}\\[1em]
    \psi(B_{ij})&=\sum_{k=1}^n \lambda_{k}^{ij} C_{k} \label{eq:psiB2}\\[1em]
    \psi(C_i)&=\sigma_i A + \sum_{\substack{k \neq l \\ k,l=1}}^n \tau_{kl}^i B_{kl} \label{eq:psiC2}
\end{align}
where \(\alpha_k,\beta_k^{ij},\delta_i,\epsilon_{kl}^i,\gamma_k,\lambda_k^{ij},\sigma_i,\tau_{kl}^i \in \mathbb{C}\).
If we apply \Cref{eq:complex_num_homo0} to \(A\), i.e. \([\varphi(A),A]=[A,\psi(A)]\), we obtain the following identity

\begin{equation}\label{eq:rel_alpha-gamma}
    -\sum_{k=1}^n \alpha_k C_k = \sum_{k=1}^n \gamma_k C_k,
\end{equation}
that is to say \(\alpha_k=-\gamma_k\) for every \(k\).  Instead, if we apply \Cref{eq:complex_num_homo0} to \(A\) and \(B_{ij}\), i.e. \([\varphi(A),B_{ij}]=[A,\psi(B_{ij})]\), we obtain the identity

\begin{equation*}
    \sum_{k=1}^n \alpha_k [C_k,B_{ij}]= \sum_{k=1}^n \lambda_k^{ij} [A,C_k]
\end{equation*}
Therefore, we have:

\begin{equation}
    -\alpha_j C_i + \alpha_i C_j = \sum_{k=1}^n \lambda_k^{ij} C_k.
\end{equation}
Hence \(\lambda_k^{ij}=0\) for every \(k \neq i,j\), \(\lambda_i^{ij}=-\alpha_j\), and \(\lambda_j^{ij}=\alpha_i\), for every \(i,j\in\left\{1,\ldots,n\right\}\).

With similar arguments, if we apply \Cref{eq:complex_num_homo0} to \(B_{ij}\) and \(A\), i.e. \([\varphi(B_{ij}),A]=[B_{ij},\psi(A)]\), we obtain \(\beta_k^{ij}=0\), for every \(k \neq i,j\), and \(\beta_i^{ij}=-\gamma_j\), \(\beta_j^{ij}=\gamma_i\), for every \(i,j\in\left\{1,\ldots,n\right\}\). By \Cref{eq:rel_alpha-gamma} we have 
\(\beta_i^{ij}=-\gamma_j=\alpha_j\) and
\(\beta_j^{ij}=\gamma_i=-\alpha_i\). Hence we rewrite \Cref{eq:varB2,eq:psiA2,eq:psiB2}, wich are

\begin{align}
    \varphi(B_{ij})&=\alpha_j C_i -\alpha_i C_j \label{eq:varB3}\\
    \psi(A)&=-\sum_{k=1}^n \alpha_kC_k\label{eq:psiA3}\\
    \psi(B_{ij})&=-\alpha_jC_i +\alpha_iC_j.\label{eq:psiB3}
\end{align}

Since \(n \geq 3\), there exists \(l\neq i,j\). Then, if we apply \Cref{eq:complex_num_homo0} to \(B_{ij},B_{il}\), i.e. \([\varphi(B_{ij}),B_{il}]=[B_{ij},\psi(B_{il})]\), we obtain

\begin{equation*}
    [\alpha_jC_i-\alpha_iC_j,B_{il}]=[B_{ij},-\alpha_lC_i-\alpha_iC_l],
\end{equation*}
that is to say $\alpha_j C_l=\alpha_l C_j$. Hence \(\alpha_j=0\), for every \(1\leq j\leq n\).

We can rewrite \Cref{eq:varA2,eq:varC2,eq:psiC2} and \Cref{eq:varB3,eq:psiA3,eq:psiB3}, wich are:

\begin{align*}
    \varphi(A)&=\varphi(B_{ij})=0\\[0.5em]
    \varphi(C_i)&=\delta_i A + \sum_{\substack{k \neq l\\ k,l=1}}^n \epsilon_{kl}^i B_{kl}\\[0.5em]
    \psi(A)&=\psi(B_{ij})=0\\[0.5em]
    \psi(C_i)&=\sigma_i A + \sum_{\substack{k \neq l\\ k,l=1}}^n \tau_{kl}^i B_{kl}
\end{align*}

By applying \Cref{eq:complex_num_homo0} to \(B_{ij}\) and \(C_k\), we have

\begin{align*}
    0&=[\varphi(B_{ij}),C_k]=[B_{ij},\psi(C_k)]\\
    &=\sum_{\substack{r \neq s\\ r,s=1}}^n \tau_{rs}^k [B_{ij},B_{rs}]\\
    &=\sum_{\substack{r \neq s\\ r,s=1}}^n \tau_{rs}^k \big\{ 
    \delta_{jr} B_{is} +
    \delta_{is} B_{jr} +
    \delta_{js} B_{ri} +
    \delta_{ri} B_{sj} \big\}\\
    &=\sum_{\substack{s \neq j\\ s=1}}^n \tau_{js}^k B_{is}+
    \sum_{\substack{r \neq i\\ r=1}}^n \tau_{ri}^k B_{jr}+
    \sum_{\substack{r \neq j\\ r=1}}^n \tau_{rj}^k B_{ri}+
    \sum_{\substack{s \neq i\\ s=1}}^n \tau_{is}^k B_{sj}\\
    &=\sum_{\substack{s \neq j\\ s=1}}^n (\tau_{js}^k - \tau_{sj}^k) B_{is}+
    \sum_{\substack{s \neq i\\ s=1}}^n (\tau_{si}^k-\tau_{is}^k) B_{js},
\end{align*}
because \(B_{ij}=-B_{ji}\). Since \(i \neq j\), the elements $\left\{B_{is},B_{js}\right\}$ are linearly independent, hence \(\tau_{rs}^t=\tau_{sr}^t\), for every \(r,s,t\in\left\{1,\ldots,n\right\}\). Thus we have

\begin{equation*}
  \psi(C_k)=\sigma_k A+\sum_{\substack{k\neq l\\ k,l=1}}^n \tau_{kl}^i B_{kl}=\sigma_k A+\sum_{\substack{k < l \\ k,l=1}}^n (\tau_{kl}^i-\tau_{lk}^i) B_{kl}=\sigma_k A .
\end{equation*}

With similar computations on \([\varphi(C_k),B_{ij}]=[C_k,\psi(B_{ij})]\), one can obtain
\(\varphi(C_k)=\delta_k A\). Lastly, if we consider the brackets
\([\varphi(C_i),C_j]=[C_i,\psi(C_j)]\), with \(i \neq j\), we get 
\(\delta_i C_j= - \sigma_j C_i\). Then \(\delta_r=\sigma_s=0\), for every \(r,s\in\left\{1,\ldots,n\right\}\). 

\end{proof}

By \Cref{thm:Teorema_complete}, \Cref{prop:VarPhigrade0}, and 
\Cref{prop:VarPhigrade1}, the following statement is proved.

\begin{thm}\label{th:biderivation_heisenberg}
Let \(F\) a superbiderivation of $\Der(L)$, with \(n\geq3\). Then \(F\) has degree $0$. Moreover, there exist \(\lambda\) complex numbers such that

\begin{gather*}
    F(A,A)=F(A,B_{ij})=F(B_{ij},A)=F(C_i,C_j)=0,
    \\F(B_{ij},B_{kl})=\lambda[B_{ij},B_{kl}] \\F(A,C_i)=\lambda [A,C_i],\quad F(B_{ij},C_k)=\lambda [B_{ij},C_k]\\
    F(C_i,A)=\mu [C_i,A], \quad F(C_i,B_{jk})=\mu [C_i,B_{jk}].
\end{gather*}
\end{thm}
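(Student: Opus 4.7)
The plan is to assemble three previously established results. Since, by Theorem 1.5 of \cite{Wang2003} (recalled just before the basis computation above), the Lie superalgebra $\Der(L)$ is complete, \Cref{thm:Teorema_complete} applies to any homogeneous superbiderivation $F$ of $\Der(L)$: there exist homogeneous linear maps $\varphi,\psi\colon\Der(L)\to\Der(L)$ of degree $\vert F\vert$ with
\[
F(x,y)=[\varphi(x),y]=[x,\psi(y)] \qquad \text{for all } x,y\in\Der(L).
\]
In particular, $\varphi$ and $\psi$ satisfy identity \Cref{eq:complex_num_homo0}, which is the exact hypothesis of \Cref{prop:VarPhigrade0} and \Cref{prop:VarPhigrade1}.

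I would then split by the degree of $F$. If $\vert F\vert=1$, the pair $(\varphi,\psi)$ is a pair of degree-$1$ maps satisfying \Cref{eq:complex_num_homo0}, so \Cref{prop:VarPhigrade1} forces $\varphi=\psi=0$, hence $F=0$. This already rules out nonzero superbiderivations of odd degree and explains the first assertion of the statement. If instead $\vert F\vert=0$, then \Cref{prop:VarPhigrade0} furnishes a scalar $\lambda\in\mathbb{C}$ for which $\varphi$ and $\psi$ both coincide with $\lambda\cdot\id$ on each generator $A$, $B_{ij}$, $C_i$; by linearity $\varphi=\psi=\lambda\cdot\id_{\Der(L)}$, and consequently
\[
F(x,y)=[\lambda x,y]=\lambda[x,y]\qquad\text{for all } x,y\in\Der(L).
\]

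To conclude, I would read off the explicit formulas by substituting pairs of basis elements of $\Der(L)$ into $F(x,y)=\lambda[x,y]$, using the bracket table given earlier in this section. The vanishings $F(A,A)=F(A,B_{ij})=F(B_{ij},A)=0$ follow from $[A,A]=[A,B_{ij}]=0$, while $F(C_i,C_j)=0$ comes from the fact that $C_iC_j=e_{n+1,i}e_{n+1,j}=0$ in matrix multiplication (since $i\le n$), so that $[C_i,C_j]=0$. The remaining identities $F(B_{ij},B_{kl})=\lambda[B_{ij},B_{kl}]$, $F(A,C_i)=\lambda[A,C_i]$, $F(B_{ij},C_k)=\lambda[B_{ij},C_k]$, $F(C_i,A)=\lambda[C_i,A]$, $F(C_i,B_{jk})=\lambda[C_i,B_{jk}]$ are then immediate; in particular the constant $\mu$ appearing in the statement is in fact equal to $\lambda$, as both come from the single scalar produced by \Cref{prop:VarPhigrade0}.

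The substantive work has already been carried out in \Cref{prop:VarPhigrade0} and \Cref{prop:VarPhigrade1}, whose basis-by-basis analyses of $[\varphi(x),y]=[x,\psi(y)]$ constitute the genuine technical heart of the argument. The present theorem is therefore a bookkeeping consequence of those two propositions together with \Cref{thm:Teorema_complete}, and I do not expect any additional obstacle beyond correctly tracking the degrees and the sign conventions inherent in the supercommutator on $\Der(L)\subseteq\End(L)$.
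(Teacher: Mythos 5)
Your proposal is correct and follows exactly the route the paper intends: it deduces the theorem by combining \Cref{thm:Teorema_complete} (using completeness of $\Der(L)$) with \Cref{prop:VarPhigrade0} for degree $0$ and \Cref{prop:VarPhigrade1} for degree $1$, then reads the formulas off $F(x,y)=\lambda[x,y]$ via the bracket table. Your observation that the constant $\mu$ in the statement equals $\lambda$ is also consistent with the paper's concluding remark that $F(x,y)=\lambda[x,y]$ for all $x,y$.
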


\noindent i.e. \(F(x,y)=\lambda[x,y]\) for every \(x,y \in L\).

\vspace{3mm}

We are now ready to classify the linear supercommuting maps for the complete Lie superalgebra $\Der(L)$.

\begin{prop}
A homogeneous map \(f\) of $\Der(L)$ is linear supercommuting if and only if \(f\) is homogeneous of degree $0$ and \(f(x)=\lambda x\), for some \(\lambda\in\mathbb{C}\).
\end{prop}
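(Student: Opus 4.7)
The plan is to reduce this statement to the two previously established propositions \Cref{prop:VarPhigrade0} and \Cref{prop:VarPhigrade1}. The key observation is that the defining condition of a linear supercommuting map, $[f(x),y]=[x,f(y)]$, is precisely the relation appearing in \Cref{eq:complex_num_homo0} with the particular choice $\varphi=\psi=f$. This reduction is what makes the result essentially a corollary rather than a separate calculation.

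For the ``if'' direction I would simply verify by inspection: if $f(x)=\lambda x$ for some $\lambda\in\mathbb{C}$, then $[f(x),y]=\lambda[x,y]=[x,\lambda y]=[x,f(y)]$, and scaling by a constant is manifestly a homogeneous map of degree $0$. This takes one line and no machinery from the earlier sections.

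For the ``only if'' direction I would split by the parity of $f$. If $|f|=0$, I set $\varphi=\psi=f$ and invoke \Cref{prop:VarPhigrade0}: that proposition produces a single scalar $\lambda\in\mathbb{C}$ with $f(A)=\lambda A$, $f(B_{ij})=\lambda B_{ij}$, and $f(C_i)=\lambda C_i$ on the entire chosen basis of $\Der(L)$; extending by linearity gives $f(x)=\lambda x$ everywhere. If instead $|f|=1$, the same substitution $\varphi=\psi=f$ in \Cref{prop:VarPhigrade1} forces $f=0$, which can be absorbed into the degree-$0$ case by taking $\lambda=0$.

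The only thing to double-check is that the hypothesis $n\geq 3$ needed by \Cref{prop:VarPhigrade0} and \Cref{prop:VarPhigrade1} is either already implicit in the Heisenberg set-up used throughout this subsection or should be added explicitly to the statement; I would include it in the hypotheses of the proposition to keep matters clean. Beyond that, no real obstacle arises, because the heavy lifting (the case analysis on the basis $\{A,B_{ij},C_i\}$ and the relations $\tilde\beta_{ij}=\alpha$, $\tilde\mu_{ij}=\lambda$, $\alpha=\lambda$, etc.) has already been done in the proofs of the two cited propositions, and the supercommuting condition is exactly their common-scalar specialisation.
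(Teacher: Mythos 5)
Your argument is correct, but it takes a genuinely different (and more direct) route than the paper. The paper's proof goes through superbiderivations: from the supercommuting map $f$ it builds the bilinear map $F(x,y)=[f(x),y]$, checks that $F$ is a homogeneous superbiderivation of $\Der(L)$ (using $[f(x),y]=[x,f(y)]$ to see $F(-,x)=R_{f(x)}$), then invokes \Cref{th:biderivation_heisenberg} to get $F=0$ when $\vert f\vert=1$ and $F=\lambda[\cdot,\cdot]$ when $\vert f\vert=0$, and finally uses triviality of the center of $\Der(L)$ to pass from $[f(x)-\lambda x,y]=0$ to $f(x)=\lambda x$. You instead observe that the supercommuting condition is literally \Cref{eq:complex_num_homo0} with $\varphi=\psi=f$ and feed $f$ directly into \Cref{prop:VarPhigrade0} (degree $0$) and \Cref{prop:VarPhigrade1} (degree $1$); since those propositions pin down $\varphi$ and $\psi$ on the basis $\{A,B_{ij},C_i\}$, you get $f=\lambda\,\id$ (resp. $f=0$) without ever mentioning superbiderivations, \Cref{thm:Teorema_complete}, or the center argument. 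Both proofs are valid: the paper's version exhibits the supercommuting-map/superbiderivation correspondence that is the theme of the section and would transfer to any complete Lie superalgebra with classified superbiderivations, while yours is shorter and avoids the completeness machinery entirely. Your remark about $n\geq 3$ is well taken --- the hypothesis is inherited from the two propositions (and from \Cref{th:biderivation_heisenberg} in the paper's route), and stating it explicitly is the cleaner option; note also that absorbing the odd case into $\lambda=0$ matches what the paper implicitly does, since the zero map is homogeneous of degree $0$.
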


\begin{proof} The \textquotedblleft only if \textquotedblright direction is trivial. Conversely, let \(f\) an homogeneous linear supercommuting map of \(\Der(L)\). Then the map \(F \colon L \times L \to L\), with \(F(x,y)=[f(x),y]\), is a superbiderivation of $\Der(L)$. Indeed we have

\begin{itemize}
    \item Since the bracket is linear and $f$ is linear, \(F\) is also linear.
    \item Since the bracket is a homogeneous bilinear map and $f$ is homogeneous, \(F\) is also homogeneous and its degree coincides with \(\vert f\vert\).
    \item Since $f$ is a linear supercommuting map of $\Der(L)$, hence $F(x,-)=[f(x),-]=L_{f(x)}$ and $F(-,x)=[f(-),x]=[-,f(x)]=R_{f(x)}$, for any $x\in\Der(L)$.
\end{itemize}
If \(\vert f\vert=1\), by \Cref{th:biderivation_heisenberg} \(F\) is zero, and so $0=F(x,y)=[f(x),y]$. The last one implies that $ f(x)=0$, for every \(x \in \Der(L)\) beacase the center of $\Der(L)$ is trivial. Otherwise, if \(\vert f\vert=0\), by \Cref{th:biderivation_heisenberg} there exixst \(\lambda\in\mathbb{C}\) such that \([f(x),y]=\lambda[x,y]\) for every \(x,y \in L\). Thus $[f(x) - \lambda x,y]=0$, for all \(x\in\Der(L)\). Since the center of $\Der(L)$ is trivial, it follows \(f(x)=\lambda x\).
\end{proof}

\begin{corollary}
Let \(f\) be a linear supercommuting map of $\Der(L)$. Then $f$ has the form \(f(x)=\lambda x\), for some $\lambda\in\mathbb{C}$ and for all $x\in\Der(L)$.
\end{corollary}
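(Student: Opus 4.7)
The plan is to reduce the general case to the homogeneous case already handled in the previous proposition, using the decomposition result from Section~\ref{sec:linearcomm}.

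First, I would apply \Cref{prop:supercomm_decomposition} to write $f = f_0 + f_1$, where $f_0$ is the even part and $f_1$ is the odd part, constructed via the projections $\pi_0,\pi_1$ as in the paragraph preceding \Cref{prop:supercomm_decomposition}. That proposition guarantees that both $f_0$ and $f_1$ are themselves linear supercommuting maps of $\Der(L)$, and by construction they are homogeneous of degree $0$ and $1$, respectively. This lets me attack each summand separately with a statement that is already in hand.

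Next, I would invoke the previous proposition on each homogeneous piece. Applied to $f_1$, which is homogeneous of degree $1$, that proposition rules out any nonzero linear supercommuting map of degree $1$, so $f_1 = 0$. Applied to $f_0$, which is homogeneous of degree $0$, it yields some $\lambda \in \mathbb{C}$ such that $f_0(x) = \lambda x$ for every $x \in \Der(L)$. Combining these two conclusions gives $f(x) = f_0(x) + f_1(x) = \lambda x$ for all $x \in \Der(L)$, which is exactly the desired form.

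I do not expect any serious obstacle: the entire argument is a clean two-line corollary once \Cref{prop:supercomm_decomposition} and the preceding proposition are both in place. The only point that merits a brief sentence in the written proof is confirming that the $\lambda$ in the odd case is forced to be $0$ (so that the conclusion is a single scalar $\lambda$, not two scalars, one per graded component), which follows immediately from the nonexistence of nonzero odd supercommuting maps.
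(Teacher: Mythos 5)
Your proposal is correct and follows exactly the paper's own argument: decompose $f = f_0 + f_1$ via \Cref{prop:supercomm_decomposition}, then apply the preceding proposition to conclude $f_1 = 0$ and $f_0(x) = \lambda x$. Nothing essential differs from the published proof.
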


\begin{proof}
The map \(f\) can be decomposed in an even and odd part, respectively $f_0$ and $f_1$ (see \Cref{prop:supercomm_decomposition}). From the previous proposition, the even part $f_0$ must be a multiple of the identity map and the odd part $f_1$ must be zero. 
\end{proof}

\subsection{Deformations of a Lie superalgebra by a superbiderivation}

The concluding segment of this section, which is dedicated to applications, adopts a geometric perspective. In this section, we introduce the concept of \emph{deformation of a Lie superalgebra} through the use of a superbiderivation. In the introduction, the applications of deformation theory for both Lie algebra and Lie superalgebra were discussed.

Following the tratement given in \cite{Binegar1986}, let $L$ be a Lie superalgebra over a field $\mathbb{F}$ of characteristic zero with brackets $[\cdot,\cdot]$. Let $\lambda$ be a parameter in $\mathbb{F}^\ast$, let $\phi_i\in C^2(L,L)$ (i.e. the vector space of all $2$-cochain for $L$ into itself) and let $[\cdot,\cdot]_\lambda$ be a new bracket defined as

\begin{equation}\label{eq:def_bracket}
  [x,y]_\lambda=[x,y]+\sum_{i=1}^\infty \lambda^i\phi_i(x,y).
\end{equation}

It must be noted that this does not a priori define a Lie superbracket on $L$. Further considerations must be made regarding the $\phi_i$ (for more details, see reference \cite{Binegar1986}). In this section, we demonstrate the possibility of deforming the bracket of a Lie superalgebra through the application of a superbiderivation. Let $B$ be a super-skewsymmetric superbiderivation of $L$ with $\vert B\vert=0$. We define the bracket (linearly) \emph{deformed by $B$}, denoted by $[\cdot,\cdot]_B$, as

\begin{equation}
    [x,y]_B=[x,y]+\lambda B(x,y),
\end{equation} 

for every $x,y\in L$. 
Since $L$ is a Lie superalgebra and $B$ has degree $0$, we have $[L_\alpha,L_\beta]\subseteq L_{\alpha+\beta}$ and $B(L_\alpha,L_\beta)\subseteq L_{\alpha+\beta}$, and then $[L_\alpha,L_\beta]\subseteq L_{\alpha+\beta}$. Hence this new deformed bracket is compatible with the $\Z_2$ graduation of $L$. Moreover, by the super-skewsymmetry of $B$ we have
\begin{align*}
    [x,y]_B&= [x,y]+\lambda B(x,y) \\
    &= -(-1)^{\vert x\vert \vert y\vert}[y,x]-(-1)^{\vert x\vert \vert y\vert}\lambda B(y,x) \\
    &= -(-1)^{\vert x\vert \vert y\vert}\left([y,x]+\lambda B(y,x)\right) \\
    &= -(-1)^{\vert x\vert \vert y\vert}[y,x]_B.
\end{align*}

We denote by $L_B$ the pair $\left(L,[\cdot,\cdot]_B\right)$. We collect these results into the following.

\begin{thm}
    Let $L$ be a Lie superalgebra and $B$ a bilinear map defined on $L$. Then:
    \begin{enumerate}
        \item If $\vert B\vert=0$, then $L_B$ is a superalgebra. 
        \item If $B$ is super-skewsymmetric, then $L_B$ is a supercommutative algebra. 
    \end{enumerate}
\end{thm}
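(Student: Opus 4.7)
The plan is to verify both parts by direct computation, using the defining identity $[x,y]_B = [x,y] + \lambda B(x,y)$ together with the super-skewsymmetry and $\mathbb{Z}_2$-grading features already recorded in the paragraph preceding the statement. In fact, both assertions have essentially been demonstrated there, so the proof amounts to packaging those observations formally.

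For the first part, I would take homogeneous $x \in L_\alpha$ and $y \in L_\beta$ and show that $[x,y]_B \in L_{\alpha+\beta}$. The original bracket satisfies $[x,y] \in L_{\alpha+\beta}$ because $L$ is a Lie superalgebra, while the hypothesis $\vert B \vert = 0$ together with the definition of the degree of a bilinear map yields $B(x,y) \in L_{\alpha+\beta+\vert B\vert} = L_{\alpha+\beta}$. The sum therefore lies in $L_{\alpha+\beta}$, and extending by bilinearity confirms that $[\cdot,\cdot]_B$ respects the $\mathbb{Z}_2$-gradation, which is precisely the condition for $L_B$ to be a superalgebra in the sense recalled in Section 2.

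For the second part, I would compute, for homogeneous $x,y \in L$, using super-skewsymmetry of both $[\cdot,\cdot]$ and $B$:
\begin{align*}
[x,y]_B &= [x,y] + \lambda B(x,y) \\
&= -(-1)^{\vert x\vert \vert y\vert}[y,x] - (-1)^{\vert x\vert \vert y\vert}\lambda B(y,x) \\
&= -(-1)^{\vert x\vert \vert y\vert}\bigl([y,x] + \lambda B(y,x)\bigr) \\
&= -(-1)^{\vert x\vert \vert y\vert}[y,x]_B,
\end{align*}
which gives the desired supercommutativity (i.e.\ super-skewsymmetry) of $[\cdot,\cdot]_B$ on homogeneous elements, and hence on all of $L$ by bilinearity.

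The main obstacle here is essentially nonexistent: the computations are routine and have already been carried out in the text preceding the statement. The only care required is to keep track of which hypothesis enters where—namely, $\vert B\vert = 0$ is needed solely for the grading compatibility in (1), while super-skewsymmetry of $B$ is needed solely for the supercommutativity in (2); neither conclusion requires $B$ to satisfy the superbiderivation identities, so the statement is slightly more general than it might first appear.
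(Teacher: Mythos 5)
Your proposal is correct and coincides with the paper's own argument: the paper proves this theorem precisely by the grading observation (using $\vert B\vert=0$) and the super-skewsymmetry computation carried out in the paragraph before the statement, which you reproduce verbatim. Your remark that neither part needs the superbiderivation identities is also consistent with how the paper states the theorem for an arbitrary bilinear map $B$.
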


As an immediate result, if we consider the usual supercommutator (as in \Cref{eq:eq:supercomm_decomposition}), the bracket induced by it on $L_B$, denoted by $[\cdot,\cdot]_B$, defines a Lie superalgebra structure on $L$. Thus the following is inferred by the last theorem and this last observation.

\begin{corollary}
    The pair $\left(L_B,[\cdot,\cdot]_B\right)$ is a Lie superalgebra.
\end{corollary}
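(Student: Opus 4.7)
The plan is to verify the three defining properties of a Lie superalgebra for the pair $(L_B,[\cdot,\cdot]_B)$: compatibility with the $\Z_2$-grading, super-skewsymmetry, and the graded Jacobi identity. The first two are exactly what the preceding theorem supplies under the hypotheses $|B|=0$ and that $B$ is super-skewsymmetric, so the entire weight of the argument falls on verifying the graded Jacobi identity.

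The natural approach is to substitute the definition $[u,v]_B=[u,v]+\lambda B(u,v)$ into
\begin{equation*}
(-1)^{|x||z|}[x,[y,z]_B]_B+(-1)^{|y||x|}[y,[z,x]_B]_B+(-1)^{|z||y|}[z,[x,y]_B]_B
\end{equation*}
and show that the resulting polynomial in $\lambda$ vanishes order by order. The $\lambda^0$ contribution is precisely the graded Jacobi identity of $L$, and so vanishes by hypothesis.

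The $\lambda^1$ contribution is a cyclic sum of terms of the form $[x,B(y,z)]+B(x,[y,z])$. I would process each $B(x,[y,z])$ by \Cref{eq:superbiderivation_eq_1}, and each $[x,B(y,z)]$—after moving the $B$-argument to the outer left slot via super-skewsymmetry of $B$ and of the ambient bracket—by \Cref{eq:superbiderivation_eq_2}. A careful but routine sign-tracking should then pair the resulting expressions and show they cancel.

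The main obstacle is the $\lambda^2$ coefficient,
\begin{equation*}
(-1)^{|x||z|}B(x,B(y,z))+(-1)^{|y||x|}B(y,B(z,x))+(-1)^{|z||y|}B(z,B(x,y)).
\end{equation*}
Vanishing of this cyclic sum is not a formal consequence of the superbiderivation axioms alone: it is tantamount to a graded Jacobi-type identity for $B$ itself. To close the proof one either needs an extra structural argument specific to super-skewsymmetric superbiderivations of degree $0$ on $L$, or—consistent with the formal-deformation perspective invoked in the preamble via the $C^2(L,L)$-cochain picture—one reads the statement as an assertion about infinitesimal deformations, working in $\mathbb{F}[\lambda]/(\lambda^2)$, in which case the $\lambda^2$ obstruction is suppressed by construction and the corollary follows at once from the vanishing of the $\lambda^0$ and $\lambda^1$ parts.
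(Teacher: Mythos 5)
Your reduction of the problem to the graded Jacobi identity and the expansion in powers of $\lambda$ is the honest way to test the statement, but the argument does not close, and the gap is not only the one you flag. First, the $\lambda^{1}$ step is not the ``routine sign-tracking'' you promise: after applying \Cref{eq:superbiderivation_eq_1,eq:superbiderivation_eq_2} together with super-skewsymmetry of $B$ and of the bracket, the order-$\lambda$ coefficient does not cancel identically; already in the purely even case it reduces to $-\bigl([x,B(y,z)]+[y,B(z,x)]+[z,B(x,y)]\bigr)$, i.e.\ to the assertion that $B$ is a $2$-cocycle, which needs a separate argument (it is clear for $B=\mu[\cdot,\cdot]$, but it is not a formal consequence of the superbiderivation axioms). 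Second, the $\lambda^{2}$ obstruction you identify is genuine and in general fatal: if $L$ is a purely even abelian Lie superalgebra, every skew-symmetric bilinear map is a degree-$0$ super-skewsymmetric superbiderivation in the sense of \Cref{defn:superbiderivation_2} (both sides of the defining identities vanish), yet for most such $B$ the cyclic sum $B(x,B(y,z))+B(y,B(z,x))+B(z,B(x,y))$ is nonzero — take $\dim L=3$ with $B(e_1,e_2)=e_3$, $B(e_2,e_3)=e_1$, $B(e_3,e_1)=e_1$ — so $[\cdot,\cdot]_B=\lambda B$ is not a Lie superbracket. Your proposed repair, working over $\mathbb{F}[\lambda]/(\lambda^{2})$, proves a different statement from the corollary as written, where $\lambda$ is a fixed scalar of $\mathbb{F}^{\ast}$; it suppresses the obstruction rather than resolving it.

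For comparison, the paper's own justification takes a completely different and much shorter route: it never expands in $\lambda$ and never verifies the graded Jacobi identity. It infers the corollary from the preceding theorem (compatibility with the $\mathbb{Z}_2$-grading and super-skewsymmetry of $[\cdot,\cdot]_B$) together with the remark that the ``usual supercommutator'' induces a Lie superalgebra structure on the superalgebra $L_B$ — an argument that is only available for associative superalgebras, and which here gives nothing new, since $[\cdot,\cdot]_B$ is already super-anticommutative and its supercommutator is just $2[\cdot,\cdot]_B$. So your analysis, although it does not prove the corollary, isolates exactly the point (the Jacobi identity, at orders $\lambda^{1}$ and $\lambda^{2}$) that the paper's two-line derivation leaves unaddressed; closing it requires extra hypotheses on $B$, for instance $B=\mu[\cdot,\cdot]$, which is what \Cref{thm:Teorema_complete} and \Cref{th:biderivation_heisenberg} actually deliver in the complete setting.
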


Given the assumptions that have been made thus far, a comparison of our definition of a deformation of the bracket with those found in the extant literature—such as those related to the cohomology of Lie superalgebra—and with the topology of (sub)varieties of Lie superalgebra is already possible (for instance, nilpotent ones, to name just one example). Furthermore, our findings concerning superbiderivations—most notably our novel definition—could serve as a foundational starting point for future geometric and algebraic developments.

\section*{Acknowledgments and Funding}

Alfonso Di Bartolo was supported by the University of Palermo (FFR2024, UNIPA BsD).  Gianmarco La Rosa was supported by "Sustainability Decision Framework (SDF)" Research Project --  CUP B79J23000540005 -- Grant Assignment Decree No. 5486 adopted on 2023-08-04. The first and third authors were also supported by the “National Group for Algebraic and Geometric Structures, and Their Applications” (GNSAGA — INdAM).

\section*{ORCID}

Alfonso Di Bartolo \orcidlink{0000-0001-5619-2644} \href{https://orcid.org/0000-0001-5619-2644}{0000-0001-5619-2644}\\
Francesco Di Fatta \orcidlink{0009-0008-6197-6126} \href{https://orcid.org/0009-0008-6197-6126}{0009-0008-6197-6126}\\
Gianmarco La Rosa \orcidlink{0000-0003-1047-5993} \href{https://orcid.org/0000-0003-1047-5993}{0000-0003-1047-5993}

\printbibliography
\end{document}